\documentclass[a4paper,10pt]{article}
\usepackage[utf8x]{inputenc}
\usepackage[colorlinks=true, pdfstartview=FitV, linkcolor=purple, citecolor=purple]{hyperref}
\usepackage{amsmath}
\usepackage{amsthm}
\usepackage{mathdots}
\usepackage{cleveref}
\usepackage{bbm}
\usepackage{graphicx, tikz, pgfplots}
\usetikzlibrary{shapes, positioning, arrows.meta,calc}
\usepackage[margin=0.8in]{geometry}
\addtolength{\topmargin}{-.375in}
\usepackage{amssymb}
\usepackage{tabularx}
\usepackage{float}
\usepackage{import}
\usepackage{enumitem}
\usepackage[toc,page]{appendix}
\usepackage{listings}
\usepackage{xcolor} 
\usepackage{makeidx}
\makeindex

\newtheorem{theorem}{Theorem}[section]
\newtheorem{lemma}[theorem]{Lemma}
\newtheorem{proposition}[theorem]{Proposition}
\newtheorem{corollary}[theorem]{Corollary}

\newtheorem{remark}{Remark}

\newcommand\floor[1]{\left\lfloor #1 \right\rfloor}

\title{On Convergents of Proper Continued Fractions}

\author{Niels Langeveld and David Ralston}
\begin{document}
\maketitle
\begin{abstract}
    Proper continued fractions are generalized continued fractions with positive integer numerators $a_i$ and integer denominators with $b_i\geq a_i$. In this paper we study the strength of approximation of irrational numbers to their convergents and classify which pairs of integers $p,q$ yield a convergent $p/q$ to some irrational $x$. Notably, we reduce the problem to finding convergence only of index one and two. We completely classify the possible choices for convergents of odd index and provide a near-complete classification for even index. We furthermore propose a natural two-dimensional generalization of the classical Gauss map as a method for dynamically generating all possible expansions and establish ergodicity of this map.
\end{abstract}

\section{Introduction and History}\label{sec: introduction}
Proper continued fractions (PCF) where introduced by Leighton in 1940 as a generalisation of regular continued fraction expansions \cite{L40}. We will presume some familiarity with \textit{regular} continued fractions throughout. Leighton showed that for any real number $x\in(0,1]$ and any sequence of positive integers $(a_n)_{n\geq 1}$ there exists a sequence of integers $(b_n)_{n\geq 1}$ with $b_n \geq a_n$ for all $n\in\mathbb{N}$ such that 
\begin{equation}\label{eq:propercf}
    x= \cfrac{a_1}{b_1+\cfrac{a_2}{b_2+\ddots \cfrac{a_n}{b_n+\ddots}}}
\end{equation}
where this continued fraction is finite if and only if $x$ is rational. Just as for regular continued fractions we can write for irrational $x$
\[x = \lim_{n \rightarrow \infty} \cfrac{a_1}{b_1+\cfrac{a_2}{b_2+\ddots+\frac{a_n}{b_n}}}=\lim_{n \rightarrow \infty} \frac{p_n}{q_n},\]
with $c_n=\frac{p_n}{q_n}$. The introduction of variable numerators in PCF changes the typical recurrence relations for $p_n$ and $q_n$:
\begin{equation}\label{eq:rec for pn and qn}
    \begin{aligned}
        p_{-1}=1,&\, p_0=0, & p_n=b_np_{n-1}+a_np_{n-2},~\text{for } n\geq 1,\\
    q_{-1}=0, &\, q_0=1, & q_n=b_nq_{n-1}+a_nq_{n-2},~\text{for } n\geq 1.
    \end{aligned}
\end{equation}
Leighton proved in \cite{L40} that $\lim_{n\rightarrow \infty} c_n=x$ and we call $c_n$ the $n^{\text{th}}$ convergent of $x$ using the given sequence of numerators $a_i$.

For regular continued fractions $p_n$ and $q_n$ are always co-prime. For these proper continued fractions this is typically not the case, and it is \emph{not} our convention to reduce such fractions when they appear. Statements regarding $p_n/q_n$ are always taken to refer directly to the numbers generated by the recurrence relations \eqref{eq:rec for pn and qn}. When this ratio as a reduced rational number is needed, we will refer to it as $c_n$. In \Cref{sec: basic properties} we will investigate the most basic properties of PCF expansions of an arbitrary $x$, such as expansions of rational numbers and the rate of convergence of $p_n/q_n$ to $x$.

A celebrated and foundational result in the theory of continued fractions is that the convergents generated by the regular continued fraction expansion of an irrational $x$ (with the possible exception of the first one) are \textit{very} close to $x$ (specifically, $p_n/q_n$ is within $1/q_n^2$ of $x$), and that the convergents are exactly the \textit{best approximations of the second kind} to $x$. As usually paraphrased, the regular continued fraction expansion of $x$ exactly gives those rational numbers which are the ``best rational approximations" to $x$. Since the regular continued fraction expansion of $x$ is just one possible proper continued fraction expansion, it is clear that the rational numbers which may appear as PCF convergents to $x$ form a weaker approximation to $x$. In \Cref{sec: approximation results} we investigate the same question: how well do PCF convergents approximate $x$, and what characterizes the set of all possible convergents? 

For an irrational $x$, given a sequence of positive integers $a_i$, the unique PCF expansion of $x$ with \textit{numerators} $a_i$ is found recursively by setting $x_0=x$, and then
\[b_i = \floor{\frac{a_i}{x_{i-1}}}, \qquad x_i = \frac{a_i}{x_{i-1}}-b_i.\]
With this observation in mind, we recall the map $T_N:(0,1] \mapsto [0,1)$ as defined in \cite{DKW13}:
\begin{equation}\label{eqn:T_N}
T_N(x) = \frac{N}{x} - \left\lfloor\frac{N}{x}\right\rfloor\end{equation}
For $N=1$ this is simply the Gauss map, which we denote as simply $T$. When all numerators are equal to $N$, the map $T_N$ is a natural analogue to the Gauss map. For proper continued fractions in general, where any irrational $x \in (0,1)$ has uncountably many different possible expansions, a different map will be needed. In \Cref{sec: new map and ergodicity} we propose a natural map and establish ergodicity with respect to a measure mutually absolutely continuous with respect to Lebesgue measure.

\section{Basic Properties}\label{sec: basic properties}
First, we will show some results regarding PCF convergents of rational numbers. Let $x_0=\frac{t_0}{s_0}$ be a rational number in $(0,1)$ such that $t_0$ and $s_0$ are co-prime.

\[
T_N\left(\frac{t_0}{s_0}\right)=\frac{N}{t_0/s_0}-\floor{\frac{N s_0}{t_0}} =\left\{\frac{N s_0}{t_0} \right\}
\]
where $\{ \cdot\}$ means the fractional part. As $s_0, t_0$ are relatively prime, we may choose $N$ to the numerator of $T_N(x_0)$ any value between $0$ and $t_0-1$. Specifically:
\begin{equation}\label{eq:TNimages}
    \bigcup_{N\in\mathbb{N}} T_N\left(\frac{t_0}{s_0}\right)=\bigcup_{N=1}^{t_0} T_N\left(\frac{t_0}{s_0}\right) =\left\{0,\frac{1}{t_0},\frac{2}{t_0}, \ldots , \frac{t_0-1}{t_0} \right\}
\end{equation}

From \eqref{eq:TNimages} we can conclude the following:
\begin{enumerate}
    \item All expansions of $x_0$ are finite as the denominator of $T_N(x_0)$ is strictly smaller that that of $x_0$.
    \item The longest length a PCF expansion of $x_0=\frac{t_0}{s_0}$ can have is $t_0$.
    \item By selecting numerators appropriately, for any length between one and $t_0$, there is some PCF expansion of $x_0$ of that length.
\end{enumerate}
We must be careful in describing the length of a PCF expansion of some rational number. An example of a longest length continued fraction is
\[
\frac{5}{6}=\frac{4}{\displaystyle 4+\frac{3}{\displaystyle  3+\frac{2}{\displaystyle  2+\frac{1}{\displaystyle 1+\frac{1}{2}}}}}.
\]
The above is an expansion of length five: $5/6=[4/4,3/3,2/2,1/1,1/2]$. However, if we track instead the $p_n$ and $q_n$ without every reducing by shared factors, we would find $p_5/q_5=120/144$. For $\frac{n-1}{n}$ we can find such longest-length expansions by using the equation $\frac{n-1}{n}=\frac{n-2}{n-2+ \frac{n-2}{n-1}}$ recursively.

Note that for continued fractions that are \textit{not} proper (e.g. permitting negative numerators or without the requirement that $b_i \geq a_i$) the continued fractions of rational numbers can have any length. Examples are given in \cite{AW11} and \cite{KL17} where $N$-expansions are studied and \cite{KL17} where alternating $N$-expansions are found (the numerators change in a cyclical way).

Let us now retrieve some results that can be found in analogy to the regular continued fractions. For basic properties and results on regular continued fractions, see for example \cite{DK02}. We view matrices as M\"obius transformations so that
\[
A(x)=\left(\begin{matrix}
a & b \\
c & d
\end{matrix}\right)(x)=\frac{ax+b}{cx+d}.
\]
We define
\[
B_{a,b}=\left(\begin{matrix}
0 & a \\
1 & b
\end{matrix}\right)
\]
Let $x=[0;a_1/b_1,a_2/b_2,\ldots ]$, $x_n = [0;a_{n+1}/b_{n+1},\ldots]$ and let 
\begin{equation}\label{eq:Mnprod}
    M_n=M_{x,n}=B_{a_1,b_1}B_{a_2,b_2}\cdots B_{a_n,b_n}
\end{equation}
so that 
\[
c_n:=M_{n}(0)=\frac{a_1}{\displaystyle b_1+\frac{a_2}{\displaystyle b_2+\frac{a_3}{\displaystyle b_3+\ddots \frac{a_n}{b_n}}}}.
\]
Just as for the regular continued fractions we can write 
\begin{equation}\label{eq:Mpnqn}
M_{n} = \left(\begin{matrix}
p_{n-1} & p_n  \\
q_{n-1} & q_n
\end{matrix}\right)
\end{equation}

\noindent From \eqref{eq:Mnprod} and \eqref{eq:Mpnqn} we get
\begin{equation}\label{eq:Mnprodres}
    \det(M_n)=\det(B_{a_1,b_1}\cdots B_{a_n,b_n})=p_{n-1}q_{n}-p_nq_{n-1}=(-1)^n\prod_{i=1}^n a_i.
\end{equation}
We also have that $x=M_n(x_n)$ which gives us
\begin{equation}\label{eq:xasfraction}
    x=\frac{p_{n-1}x_n+p_{n}}{q_{n-1}x_n+q_{n}}.
\end{equation}
Using \eqref{eq:Mnprodres} and \eqref{eq:xasfraction} we get the following for the distance between $x$ and convergent $c_n$:
\[
x-\frac{p_n}{q_n}=\frac{p_{n-1}x_n+p_{n}}{q_{n-1}x_n+q_{n}}-\frac{p_n}{q_n}=\frac{q_n(p_{n-1}x_n+p_{n})-p_n(q_{n-1}x_n+q_{n})}{q_n(q_{n-1}x_n+q_{n})}=\frac{x_n(-1)^n\prod_{i=1}^n a_i}{q_n(q_{n-1}x_n+q_{n})}
\]
so that
\begin{equation}\label{eq:starting denominator estimate}|x-c_n|<\frac{\prod_{i=1}^{n+1}  a_i}{q_n q_{n+1}}\end{equation}
as well as the observation (similar to the realm of regular continued fractions) that $c_n<x$ for $n$ even and $c_n>x$ for $n$ odd.

As a final adaptation of standard results in regular continued fractions to proper continued fractions, we recall that there is a strong relationship between the regular continued fraction expansions of $x,1-x$ and their respective $q_n$:
\[x=[0;1/b_1,1/b_2,\ldots] \qquad \longrightarrow \qquad (1-x) = \begin{cases}[0;1/(b_2+1),1/b_3,1/b_4,\ldots] & (b_1=1)\\ [0;1/1,1/(b_1-1),1/b_2,\ldots] & (b_1 >1) \end{cases}\]
and for all $n \geq 1$ we have
\begin{align*}
q_n(1-x) &= q_{n+1}(x) &(b_1=1)\\
q_{n+1}(1-x) &=q_n(x) &(b_1>1)
\end{align*}
We have a similar result for proper continued fractions:
\begin{lemma}\label{lem: formula for PCF of 1-x}
Let $x=[0;a_1/b_1,a_2/b_2,\ldots]$. Then:
\[1-x = \begin{cases}
        [0;a_2/(b_1b_2+a_2),b_1a_3/b_3,a_4/b_4,\ldots] & (b_1=a_1)\\ 
        [0;1/1,a_1/(b_1-a_1),a_2/b_2,\ldots] & (b_1\geq 2a_1)
    \end{cases}
\]
Furthermore, for all $n \geq 1$ we have
\begin{align*}
q_n(1-x) &= q_{n+1}(x) &(b_1=a_1)\\
q_{n+1}(1-x) &=q_n(x) &(b_1\geq 2 a_1)
\end{align*}
\begin{proof}
    First suppose that $b_1\geq 2 a_1$:
    \[x = \frac{a_1}{b_1 + x_1}\]
    where we recall that $x_n = [a_{n+1}/b_{n+1},\ldots]$. We then compute
    \[1-x = \frac{(b_1-a_1) + x_1}{b_1 + x_1}=\frac{1}{1+\cfrac{a_1}{(b_1-a_1)+x_1}}=\cfrac{1}{1+\cfrac{a_1}{b_1-a_1+\cfrac{a_2}{b_2+\ddots}}}\]
    which is a valid PCF expansion as $b_1-a_1 \geq a_1$ by assumption.

    Second suppose that $a_1=b_1$, so that
    \[ x = \cfrac{a_1}{b_1+\cfrac{a_2}{b_2+x_2}}\]
    Then we compute (using $b_1-a_1=0$ in the numerator):
    \[1-x = \frac{\frac{a_2}{b_2+x_2}}{b_1+\frac{a_2}{b_2+x_2}}= \frac{a_2}{b_1 b_2 + a_2 + b_1 x_2}=\cfrac{a_2}{b_1 b_2+a_2 + \cfrac{b_1 a_3}{b_3 + \ddots}}\]

   The claims regarding the relationship between $q_n$, $q_{n+1}$ may now be proved inductively.
\end{proof}
\end{lemma}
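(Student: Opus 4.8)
The plan is to treat the two regimes $b_1=a_1$ and $b_1\geq 2a_1$ separately, first producing the explicit expansions and then bootstrapping the denominator identities by induction. For the expansion formulas I would start from $x=a_1/(b_1+x_1)$, where $x_1=[0;a_2/b_2,\ldots]$ is the tail, and rewrite $1-x=(b_1-a_1+x_1)/(b_1+x_1)$. When $b_1\geq 2a_1$ the numerator $b_1-a_1$ is a positive integer with $b_1-a_1\geq a_1$, so I can refold as $1/(1+a_1/((b_1-a_1)+x_1))$, which is already a valid PCF since each new denominator dominates its numerator. When $b_1=a_1$ the leading term $b_1-a_1$ collapses to $0$, so I would instead expand one level deeper, writing $x=a_1/(b_1+a_2/(b_2+x_2))$ and clearing denominators; the factor $b_1$ surfacing in front of $a_3$ is exactly what keeps the resulting fraction proper. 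This is precisely the computation carried out above.

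The substance is the denominator bookkeeping. In each case I would name the partial quotients of $1-x$ as $(a_k',b_k')$ and write $Q_n:=q_n(1-x)$, governed by $Q_{-1}=0$, $Q_0=1$, $Q_n=b_n'Q_{n-1}+a_n'Q_{n-2}$. For $b_1\geq 2a_1$ the data are $(a_1',b_1')=(1,1)$, $(a_2',b_2')=(a_1,b_1-a_1)$, and $(a_k',b_k')=(a_{k-1},b_{k-1})$ for $k\geq 3$. A direct computation gives $Q_1=1=q_0$ and $Q_2=(b_1-a_1)+a_1=b_1=q_1$, furnishing two consecutive base cases. For $k\geq 3$ the recurrence reads $Q_k=b_{k-1}Q_{k-1}+a_{k-1}Q_{k-2}$, which is term-for-term the recurrence for $q_{k-1}$; so assuming $Q_{k-1}=q_{k-2}$ and $Q_{k-2}=q_{k-3}$ yields $Q_k=q_{k-1}$, i.e. $Q_{n+1}=q_n$ for all $n\geq 1$.

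The $b_1=a_1$ case runs along the same lines but is where the one genuine subtlety lives. Here the partial quotients are $(a_1',b_1')=(a_2,b_1b_2+a_2)$, $(a_2',b_2')=(b_1a_3,b_3)$, and $(a_k',b_k')=(a_{k+1},b_{k+1})$ for $k\geq 3$. The anomaly is the stray factor $b_1$ riding on $a_2'$, which breaks the clean index shift at exactly one step. Computing the base cases, $Q_1=b_1b_2+a_2=q_2$ is immediate, and then $Q_2=b_3Q_1+b_1a_3=b_3q_2+a_3q_1=q_3$, where the crucial point is that $q_1=b_1$ absorbs the extra factor and restores the alignment. From $k\geq 3$ onward the recurrence $Q_k=b_{k+1}Q_{k-1}+a_{k+1}Q_{k-2}$ matches that of $q_{k+1}$, so induction from the two base values gives $Q_n=q_{n+1}$ for all $n\geq 1$. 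I expect this single reconciliation, verifying that the lone $b_1$ factor is cancelled precisely because $q_1=b_1$ in the original expansion, to be the only non-routine step; everything else is a mechanical matching of three-term recurrences.
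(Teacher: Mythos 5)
Your proposal is correct and follows essentially the same route as the paper: the same algebraic refolding of $1-x$ in each of the two regimes ($b_1\geq 2a_1$ via $1-x=\frac{(b_1-a_1)+x_1}{b_1+x_1}$, and $b_1=a_1$ by expanding one level deeper), followed by matching the three-term recurrences under an index shift. The paper dismisses the denominator identities with ``may now be proved inductively,'' so your explicit base cases $Q_1=q_2$, $Q_2=q_3$ (resp.\ $Q_1=q_0$, $Q_2=q_1$) and the observation that $q_1=b_1$ absorbs the stray factor $b_1$ on $a_2'$ supply exactly the detail the paper omits.
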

\begin{remark}
    The situation when $a_1<b_1<2a_1$ in \Cref{lem: formula for PCF of 1-x} is remarkably unclear. However, the condition that $b_1 \geq 2a_1$ is seen to be equivalent to $x\leq 1/2$, and $a_1=b_1$ is equivalent to $x>1-1/(a_1+1)$. If $1/2<x<a_1/(a_1+1)$, then we may set $y=1-x$, and since $y<1/2$ the same results would apply\ldots to $y$, whose PCF expansion is not clear from that of $x$. The fact that the PCF expansion of $y$ has a tail that is not identical to some tail of the PCF expansion of $x$ in this circumstance is the primary obstacle to developing a theory of ``semi-proper continued fractions," an analogue of semi-regular continued fractions (in which numerators may be $\pm 1$).
\end{remark}
\section{Approximation and Classification Results}\label{sec: approximation results}
In this section we tackle two general questions:
\begin{itemize}
    \item If $p/q=p_n/q_n$ for some PCF expansion of $x$, how good of an approximation to $x$ is $p/q$? In other words, what upper bound can we place on $|x-p/q|$?
    \item If $p/q$ is a rational number which satisfies the bounds given by the above problem, does that guarantee that $p/q$ is actually a convergent to $x$ for some PCF expansion?
\end{itemize}

In both questions we will draw parallels to the study of regular continued fractions. For instance, for regular continued fractions we have the classical result that for any convergent $p_n/q_n$ we have
\[ \left| x - \frac{p_n}{q_n}\right| < \frac{1}{q_n q_{n+1}}.\]
Furthermore, for any $x$ there are infinitely many $n$ such that
\[\left| x - \frac{p_n}{q_n}\right| < \frac{1}{2q_n^2},\]
and conversely if $p/q$ is any rational number satisfying $|x-p/q|<1/(2q^2)$ then $p/q$ must be a convergent. In a sense, this dictates that convergents of regular continued fractions are not just excellent approximations to $x$, but the \textit{best possible} approximations. 

The first question is partially answered in \eqref{eq:starting denominator estimate}, but we derive an interesting and useful consequence:
\begin{theorem}\label{thm:proper convergent approximation}
If $p_n/q_n$ is a convergent of $x$ for some proper continued fraction expansion of $x$ and $n \geq 1$, then
\[ \left| x - \frac{p_n}{q_n}\right| < \frac{x}{q_n},\]
i.e.
\[ \left| q_n x - p_n \right| < x\]
\begin{proof}
    It is immediate to prove two things for all $n\geq 1$:
    \begin{itemize}
        \item $a_1 a_2 \cdots a_{n+1} < p_{n+1}$,\\
        \item $a_{n+1}/q_{n+1}<1/q_n$
    \end{itemize}
    So from \eqref{eq:starting denominator estimate}, if $n$ is odd, then $p_{n+1}/q_{n+1}<x$ and:
    \[ \left| x - \frac{p_n}{q_n}\right|<\frac{a_1 \cdots a_{n+1}}{q_n q_{n+1}}<\frac{1}{q_n} \frac{p_{n+1}}{q_{n+1}} < \frac{x}{q_n}.\]
    But if $n$ is even, $p_n/q_n<x$ and:
    \[ \left| x - \frac{p_n}{q_n}\right|<\frac{a_1 \cdots a_{n+1}}{q_n q_{n+1}}<\frac{a_1\cdots a_n}{q_n} \frac{a_{n+1}}{q_{n+1}} < \frac{p_n}{q_n}\frac{1}{q_n}<\frac{x}{q_n}.\qedhere\]
\end{proof}
\end{theorem}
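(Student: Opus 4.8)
The plan is to run directly from the strict distance estimate \eqref{eq:starting denominator estimate}, namely $|x-c_n|<\prod_{i=1}^{n+1}a_i/(q_nq_{n+1})$, and to convert the product of numerators appearing in that bound into a quantity comparable with $x$ itself. To do this I would isolate two elementary facts about the sequences defined by \eqref{eq:rec for pn and qn}. The first is the product bound $\prod_{i=1}^{m}a_i \le p_m$ for every $m\ge 1$; the second is $a_{n+1}q_n < q_{n+1}$ (equivalently $a_{n+1}/q_{n+1}<1/q_n$) for every $n\ge 1$.

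I would prove the product bound by induction. The base cases $p_1=a_1$ and $p_2=a_1b_2\ge a_1a_2$ follow immediately from the recurrence, and for $m\ge 3$ the inductive step is
\[p_m=b_mp_{m-1}+a_mp_{m-2}\ge a_mp_{m-1}\ge a_m\prod_{i=1}^{m-1}a_i=\prod_{i=1}^{m}a_i,\]
using $b_m\ge a_m$, the positivity $p_{m-2}\ge 0$, and the inductive hypothesis. The second fact is even more direct: from $q_{n+1}=b_{n+1}q_n+a_{n+1}q_{n-1}\ge a_{n+1}q_n+a_{n+1}q_{n-1}>a_{n+1}q_n$, where the final strict step uses $q_{n-1}\ge q_0=1>0$ for $n\ge 1$.

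With these in hand I would split on the parity of $n$ and invoke the sign information recorded just after \eqref{eq:starting denominator estimate} (that $c_n<x$ for $n$ even and $c_n>x$ for $n$ odd). For odd $n$ the convergent $c_{n+1}$ has even index, so applying the product bound with $m=n+1$ and then $c_{n+1}<x$ gives
\[\left|x-\frac{p_n}{q_n}\right|<\frac{\prod_{i=1}^{n+1}a_i}{q_nq_{n+1}}\le\frac{1}{q_n}\cdot\frac{p_{n+1}}{q_{n+1}}=\frac{c_{n+1}}{q_n}<\frac{x}{q_n}.\]
For even $n$ I would instead factor $\prod_{i=1}^{n+1}a_i=\bigl(\prod_{i=1}^{n}a_i\bigr)a_{n+1}$ and chain the product bound with $m=n$, then the second fact, then $c_n<x$:
\[\left|x-\frac{p_n}{q_n}\right|<\frac{\prod_{i=1}^{n}a_i}{q_n}\cdot\frac{a_{n+1}}{q_{n+1}}\le\frac{p_n}{q_n}\cdot\frac{a_{n+1}}{q_{n+1}}<\frac{c_n}{q_n}<\frac{x}{q_n}.\]

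The one point to watch — and the likely source of any slip — is strictness. The product bound $\prod_{i=1}^{m}a_i\le p_m$ is a genuine \emph{equality} when $m\le 2$ and $b_2=a_2$, so the final strict inequality cannot be extracted from that step and must be supplied separately in each case. I would therefore take care that strictness comes from $c_{n+1}<x$ (equivalently, from the strictness already present in \eqref{eq:starting denominator estimate}) in the odd case, and from the second fact together with $c_n<x$ in the even case. All of these are strict precisely because $x$ is irrational, so the relevant even-index convergent is never equal to $x$ and the tail $x_{n+1}$ is strictly positive; if $x$ were rational and $p_n/q_n$ its terminal convergent the bound would instead hold trivially since then $x-p_n/q_n=0<x/q_n$.
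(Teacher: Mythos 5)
Your proof is correct and follows essentially the same route as the paper: the strict estimate \eqref{eq:starting denominator estimate} combined with the bounds $\prod_{i=1}^{m}a_i\le p_m$ and $a_{n+1}q_n<q_{n+1}$, split by parity of $n$ and closed using the sign of the even-index convergent. Your only deviation is an improvement: the paper asserts the strict inequality $a_1\cdots a_{n+1}<p_{n+1}$ for all $n\ge 1$, which fails at $n=1$ when $b_2=a_2$ (since $p_2=a_1b_2$), and you correctly weaken this to $\le$ and source the needed strictness from the other steps.
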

The inequalities stated in the proof of \Cref{thm:proper convergent approximation} are not sharp, but are also optimal:
\begin{lemma}
    For any $\varepsilon>0$, for any positive integer $n$, there is an $x$ and choice of $a_1,\ldots,a_{n+1}$ such that
\[(1+\varepsilon)a_1 a_2 \cdots a_{n+1} > p_{n+1}, \qquad \frac{a_{n+1}}{q_{n+1}}>(1-\varepsilon) \frac{1}{q_n}.\]
\begin{proof}
    For the second claim, for $a_i,b_i$ arbitrary through index $n-1$, we have
    \[q_n = b_n q_{n-1}+a_n q_{n-1} \geq a_n q_{n-1},\]
    so if we select $a_n > (1/\varepsilon-1)$ we have
    \[\frac{q_{n-1}}{q_n}<\frac{\varepsilon}{1-\varepsilon}.\]
    Then, regardless of the choice of $a_{n+1}$, we may find an $x$ so that $b_{n+1}=a_{n+1}$. We then find:
    \begin{align*}
        \frac{a_{n+1}}{q_{n+1}}&= \frac{a_{n+1}}{b_{n+1}q_n+a_{n+1}q_{n-1}}\\
        &=\left(\frac{1}{\frac{b_{n+1}}{a_{n+1}}+\frac{q_{n-1}}{q_n}} \right)\frac{1}{q_n}\\
        &>\frac{1}{1+\frac{\varepsilon}{1-\varepsilon}}\frac{1}{q_n}\\
        &=\left(1-\varepsilon\right) \frac{1}{q_n}
    \end{align*}
    For the first claim, we will similarly assume that $x$ is chosen so that \textit{every} $b_i=a_i$. We then recursively choose the $a_i$ to be growing sufficiently fast so that (with $n$ and $\varepsilon$ fixed):
    \[1+\frac{p_{i-1}}{p_i} < (1+\varepsilon)^{1/n}.\]
    We can then compute
    \begin{align*}
        p_{n+1} &=b_{n+1}p_n+a_{n+1}p_{n-1}\\
        &=a_{n+1} \left(p_n+p_{n-1}\right)\\
        &=a_{n+1} \left(1+\frac{p_{n-1}}{p_n}\right)p_n\\
        &<\left(1+\varepsilon\right)^{1/n}a_{n+1}p_n\\
        &<\left(1+\varepsilon\right)^{2/n}a_{n+1}a_np_{n-1}\\
        &\vdots\\
        &<(1+\varepsilon)a_{n+1}a_n\cdots a_2 p_1
    \end{align*}
    And as $p_1=a_1$ the proof is complete.
\end{proof}
\end{lemma}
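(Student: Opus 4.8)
The plan is to engineer the extremal case by forcing every partial denominator to be minimal, $b_i = a_i$ for all $i \le n+1$; this is precisely the choice that makes both inequalities from the proof of \Cref{thm:proper convergent approximation} nearly sharp. First I would justify that such an $x$ exists: any sequence of pairs $(a_i,b_i)$ with $a_i \ge 1$ and $b_i \ge a_i$ forms a valid infinite PCF and hence (being infinite) converges to an irrational $x$, and the recursive algorithm recovers the prescribed denominators, since for a valid infinite PCF every tail $x_i$ lies in $(0,1)$, giving $a_i/x_{i-1} = b_i + x_i$ and therefore $\floor{a_i/x_{i-1}} = b_i$. So I may prescribe $b_i = a_i$ for $i \le n+1$ and extend arbitrarily. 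Under this choice the recurrences \eqref{eq:rec for pn and qn} collapse to $p_i = a_i(p_{i-1}+p_{i-2})$ and $q_i = a_i(q_{i-1}+q_{i-2})$.

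For the second inequality the collapse gives $q_{n+1} = a_{n+1}(q_n + q_{n-1})$, so that
\[ \frac{a_{n+1}}{q_{n+1}} = \frac{1}{q_n + q_{n-1}} = \frac{1}{q_n}\cdot\frac{1}{1 + q_{n-1}/q_n}. \]
Since $q_n \ge b_n q_{n-1} \ge a_n q_{n-1}$, the ratio $q_{n-1}/q_n$ is at most $1/a_n$, so taking $a_n > 1/\varepsilon - 1$ forces $1 + q_{n-1}/q_n < 1/(1-\varepsilon)$ and hence $a_{n+1}/q_{n+1} > (1-\varepsilon)/q_n$. This part is immediate once the collapse is recorded.

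For the first inequality I would unwind $p_{n+1} = a_{n+1}p_n(1 + p_{n-1}/p_n)$ all the way down to $p_1 = a_1$, obtaining
\[ p_{n+1} = a_1 a_2 \cdots a_{n+1}\prod_{j=2}^{n+1}\left(1 + \frac{p_{j-2}}{p_{j-1}}\right), \]
whose product contains $n-1$ nontrivial factors exceeding $1$ (the $j=2$ factor is trivial since $p_0 = 0$). The crucial per-factor estimate is $p_{j-1} \ge a_{j-1}p_{j-2}$, so each factor is at most $1 + 1/a_{j-1}$. I would then choose the $a_i$ recursively, large enough that each factor stays below $(1+\varepsilon)^{1/n}$, which holds once $a_{j-1} > 1/((1+\varepsilon)^{1/n}-1)$; the product of the $n-1$ factors is then below $(1+\varepsilon)^{(n-1)/n} < 1+\varepsilon$, yielding $p_{n+1} < (1+\varepsilon)a_1\cdots a_{n+1}$.

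The main (and essentially only) delicate point is that a single $x$ and a single tuple $(a_1,\ldots,a_{n+1})$ must satisfy both inequalities at once; I expect the bookkeeping of the telescoped product to be the step needing the most care. There is no real conflict, however: both claims only demand that the lower-index $a_i$ be large, with thresholds depending solely on $\varepsilon$ and $n$. So I would take every $a_i$ with $i \le n$ to exceed the larger of the two thresholds $1/\varepsilon - 1$ and $1/((1+\varepsilon)^{1/n}-1)$, leave $a_{n+1}$ free, set all $b_i = a_i$, and read off both inequalities from the two computations above.
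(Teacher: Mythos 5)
Your proof is correct and follows essentially the same route as the paper's: force $b_i=a_i$, use $q_n\ge a_nq_{n-1}$ with $a_n>1/\varepsilon-1$ for the second inequality, and telescope $p_{n+1}=a_1\cdots a_{n+1}\prod_j\bigl(1+p_{j-2}/p_{j-1}\bigr)$ with each nontrivial factor pushed below $(1+\varepsilon)^{1/n}$ for the first. Your explicit thresholds on the $a_i$ and your remark that the two sets of conditions are compatible (so a single $x$ witnesses both inequalities simultaneously) make the write-up slightly more complete than the paper's, which handles the two claims separately.
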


So we turn to the more intricate problem of classifying which $p,q$ might appear as some $p_n,q_n$.  We call some $p,q$ which satisfy $|x-p/q|<x/q$ a \textit{candidate} for being a convergent; we are taking care to \textit{not reduce shared factors} in $p/q$. From \Cref{thm:proper convergent approximation} we conclude that if $p,q$ is a candidate to be a convergent, then if $p/q<x$ it can only be a convergent of even index, while if $p/q>x$ then it can only be a convergent of odd index. We refer to these as \textit{even candidates} and \textit{odd candidates}, respectively.

\begin{lemma}
    \label{lem: unique candidate pairs}
    Let $x$ be irrational. Then for each positive integer $p$ there is both a unique $q$ so that $p,q$ is an odd candidate, and a unique $q$ for the pair to be an even  candidate. These two $q$'s are given by
    \[ \floor{\frac{p}{x}}, \floor{\frac{p}{x}}+1\]
    respectively.

    Similarly, for each $q$ there is at most one $p$ so that $p,q$ is an odd candidate, and at most one $p$ so that $p,q$ is an even candidate. These two $p$'s are given by $\floor{qx}-1$, $\floor{qx}$, respectively.
    \begin{proof}
        Let $x$ and $p$ be fixed. We will focus first on odd candidates. Since the interval $[p-x,p]$ is of length $x$, there is a unique $q$ so that
        \[ p-x < qx < p.\]
        On the one hand, division by $x$ gives
        \[\frac{p}{x}-1<q< \frac{p}{x}\]
        from which it follows that $q = \floor{p/x}$. On the other hand, division by $q$ establishes that
        \[ |x - p/q|<x/q,\]
        so $p,q$ is a candidate, and that $p/q>x$, so it can only be of odd index.

        Similar consideration to the interval $(p,p+x]$ gives the same result for even candidates of even index, and as this interval is exactly the previous interval translated by $x$ we find that $q$ has increased by one.

        The final statement follow immediately from similar considerations. When $p-1<qx<p$ we have $p=\floor{qx}+1$, but $p,q$ can only be an odd candidate if $|x-p/q|<1/q$, i.e. $p-x<qx<p$, and similarly for $p<qx<p+1$ we can only be an even candidate when $qx<\floor{qx}+x$, so these choices for $p$ are not \emph{guaranteed} to be candidates, but they are the only possible candidates.
    \end{proof}
\end{lemma}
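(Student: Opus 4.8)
The plan is to clear the denominator in the defining inequality and turn the whole question into counting how many terms of a simple arithmetic progression fall inside a short interval. Since $q>0$, the candidate condition $|x-p/q|<x/q$ is equivalent to $|qx-p|<x$, i.e. $p-x<qx<p+x$. The parity constraint then splits this cleanly: an odd candidate (where $p/q>x$, so $qx<p$) is exactly a solution of $p-x<qx<p$, while an even candidate (where $p/q<x$, so $qx>p$) is exactly a solution of $p<qx<p+x$. Everything below is a matter of reading these two inequalities first with $q$ as the unknown and then with $p$ as the unknown.

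For the first assertion I would fix $p$ and treat $q$ as the variable. Dividing the odd condition $p-x<qx<p$ by $x$ gives $p/x-1<q<p/x$, an open interval of length exactly $1$; because $x$ is irrational, $p/x$ and $p/x-1$ are not integers, so this interval contains precisely one integer, namely $q=\floor{p/x}$. Applying the same division to the even condition yields $p/x<q<p/x+1$, again a length-$1$ open interval, whose unique integer is $q=\floor{p/x}+1$. Dividing these same inequalities by $q$ instead returns $|x-p/q|<x/q$ together with the correct sign, confirming that the $q$ just produced really does give a genuine candidate of the stated parity; positivity of $q$ is automatic since $p\geq 1$ and $0<x<1$ force $p/x>1$.

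For the second assertion I would fix $q$ and treat $p$ as the variable. Now the odd condition reads $qx<p<qx+x$ and the even condition reads $qx-x<p<qx$, each an open interval in $p$ of length $x$. Since $0<x<1$, such an interval contains \emph{at most} one integer, which is precisely the ``at most one'' in the statement. When an integer does occur, irrationality pins it down: the odd candidate must be the integer immediately above $qx$, namely $\floor{qx}+1$, and the even candidate must be the integer immediately below $qx$, namely $\floor{qx}$. These are the only integers that can ever serve, but, unlike the fixed-$p$ case, they need not actually fall inside the length-$x$ window, so they qualify as candidates only when the window happens to catch them.

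The one genuine subtlety — and the step to handle with care — is the asymmetry between the two halves of the argument: fixing $p$ produces an interval of length $1$ in the variable $q$, which forces both existence and uniqueness, whereas fixing $q$ produces an interval of length $x<1$ in the variable $p$, which forces only ``at most one'' (and possibly none). In both halves the irrationality of $x$ is doing the essential work, guaranteeing that $qx$ is never an integer and $p/x$ is never an integer, so that no solution is ever lost or double-counted at an endpoint of the relevant interval.
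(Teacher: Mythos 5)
Your proof is correct and follows essentially the same route as the paper's: clear the denominator to get $p-x<qx<p$ (odd) and $p<qx<p+x$ (even), then read each as a unit-length interval in $q$ (forcing existence and uniqueness) or a length-$x$ interval in $p$ (forcing only ``at most one''), with irrationality of $x$ ruling out endpoint cases. Note that your value $\floor{qx}+1$ for the odd-candidate $p$ agrees with the paper's own proof; the ``$\floor{qx}-1$'' in the lemma statement appears to be a typo.
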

We present an interesting corollary which would otherwise be tedious to prove directly:
\begin{corollary}
    A positive integer $q$ solves $\{qx\}<x$ if and only if
    \[ \floor{\frac{\floor{qx}}{x}}+1=q,\]
    and solves $\{q x\}>1-x$ if and only if
    \[\floor{\frac{\floor{qx}-1}{x}}=q.\]
    \begin{proof}
        Those $q$ for which $\{qx\}<x$ are exactly $q$ so that $p,q$ is an even candidate pair for some $p$, if and only if both $p=\floor{qx}$ and $q=\floor{p/x}+1$, giving the desired result. The derivation is similar for those $\{qx\}>1-x$.
    \end{proof}
\end{corollary}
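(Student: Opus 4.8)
The plan is to obtain both equivalences directly from \Cref{lem: unique candidate pairs} by reading the two fractional-part conditions as statements about the existence of candidates. Recall that a candidate $(p,q)$ is an even candidate when $p/q<x$ and an odd candidate when $p/q>x$, and that \Cref{lem: unique candidate pairs} pins down, for each $q$, the unique admissible numerator $p$ in each parity, together with the reciprocal relations $q=\floor{p/x}+1$ (even) and $q=\floor{p/x}$ (odd) obtained from its ``for each $p$'' direction. The whole argument is then a composition of these two directions of the Lemma, so the proof is short once the bookkeeping is set up correctly.

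First I would establish the equivalence for $\{qx\}<x$, which serves as the template. Setting $p=\floor{qx}$, the inequality $\{qx\}<x$ rewrites as $qx-x<p<qx$; since $p<qx$ this says $p/q<x$, and since $qx-p<x$ it says $|x-p/q|<x/q$. Hence $\{qx\}<x$ holds exactly when $(\floor{qx},q)$ is an even candidate. By \Cref{lem: unique candidate pairs} an even candidate with numerator $p$ has denominator $q=\floor{p/x}+1$, and conversely that denominator realizes $(p,q)$ as an even candidate; substituting $p=\floor{qx}$ yields $q=\floor{\floor{qx}/x}+1$. For the converse one starts from $q=\floor{\floor{qx}/x}+1$, sets $p=\floor{qx}$, and reads off $qx-x<p<qx$, i.e. $\{qx\}<x$. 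It is cleanest to restrict to those $q$ with $\floor{qx}\geq 1$, so that $p$ is a genuine positive numerator covered by the Lemma; the only case requiring separate comment is the degenerate $p=0$.

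The equivalence for $\{qx\}>1-x$ is handled in exactly the same way, with even candidates replaced by odd candidates: $\{qx\}>1-x$ is precisely the condition that $q$ arise as the denominator of an odd candidate. \Cref{lem: unique candidate pairs} then supplies the unique admissible odd-candidate numerator attached to such a $q$, as well as the relation $q=\floor{p/x}$ from its ``for each $p$'' direction. Composing these --- substituting the Lemma's recorded odd-candidate numerator into $q=\floor{p/x}$ --- produces the stated identity $\floor{(\floor{qx}-1)/x}=q$, and the converse follows once more from the uniqueness clauses of the Lemma.

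I expect the main obstacle to be the off-by-one bookkeeping separating the two cases. One must check that the thresholds $x$ and $1-x$ are matched to the correct candidate parity and, above all, to the exact numerator tabulated in \Cref{lem: unique candidate pairs}, because any offset in that numerator propagates directly to a $\pm 1$ shift of $\floor{qx}$ inside the final floor and so changes $\floor{(\floor{qx}-1)/x}$ into a neighboring expression. Verifying that the composition reproduces the Lemma's tabulation verbatim --- rather than a superficially equivalent variant --- is the only delicate point; the remainder is the mechanical substitution illustrated in the even case above.
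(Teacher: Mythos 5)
Your approach is the same as the paper's: translate $\{qx\}<x$ (resp.\ $\{qx\}>1-x$) into the statement that $q$ is the denominator of an even (resp.\ odd) candidate pair, then compose the two uniqueness directions of \Cref{lem: unique candidate pairs}. Your even case is carried out correctly, and in more detail than the paper's one-line proof.

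The odd half, however, has a genuine gap, and it sits exactly at the point you yourself flagged as delicate and then resolved the wrong way. The unique odd-candidate numerator attached to $q$ is \emph{not} the value $\floor{qx}-1$ tabulated in the statement of \Cref{lem: unique candidate pairs}: the defining inequalities $qx<p<qx+x$ force $p=\floor{qx}+1$, which is what the Lemma's own proof derives (the $-1$ in its statement is a slip). Consequently the composition with $q=\floor{p/x}$ yields $\floor{(\floor{qx}+1)/x}=q$, not the printed identity $\floor{(\floor{qx}-1)/x}=q$ --- and the printed identity is in fact false: for $x=\sqrt{2}-1$ and $q=2$ one has $\{2x\}\approx 0.828 > 1-x \approx 0.586$, yet $\floor{qx}=0$, so $\floor{(\floor{qx}-1)/x}=\floor{-1/x}=-3\neq 2$, whereas $\floor{(\floor{qx}+1)/x}=\floor{\sqrt{2}+1}=2$. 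By asserting that substituting ``the Lemma's recorded odd-candidate numerator'' ``produces the stated identity,'' you endorsed the erroneous tabulation rather than checking it; an honest substitution, like the one you performed in the even case, would have exposed the discrepancy. So as written your proposal establishes the even equivalence but not the odd one; with the numerator corrected to $\floor{qx}+1$ your argument goes through verbatim and simultaneously corrects the statement. (Your side worry about the degenerate case $p=\floor{qx}=0$ in the even case is moot: if $q\geq 1$ and $\{qx\}<x$ then $\floor{qx}\geq 1$, since $q=1$ gives $\{x\}=x$ and $q\geq 2$ with $\floor{qx}=0$ gives $\{qx\}=qx\geq 2x>x$.)
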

\begin{lemma}
    For any even candidate pair $p,q$, $qx$ is the $p$-th return of the origin to the interval $[0,x)$ under rotation by $x$ modulo one. The $p$-th return map may then be considered as rotation by $T_p(x)$ after rescaling by $x$ and reversing orientation. Similarly for an odd candidate pair the $p$-th return to the interval $[1-x,1)$ may be viewed as (rescaled, reversed orientation) rotation by $T_p(x)$.
    \begin{proof}
    The fact that $qx$ is the $p$-th return to the relevant interval is direct from \Cref{lem: unique candidate pairs}. Note that for an even candidate pair we have $p<qx<p+x$, from which one derives $(q-1)<p/x<q$, so $\floor{p/x}=(q-1)$. Then the reversed-orientation rotation can be computed to be of length
    \[p+x-qx=p-(q-1)x=x\left(\frac{p}{x}-(q-1)\right)=xT_p(x),\]
    with the work being identical for odd candidate pairs except that $p-x<qx<p$ and therefore $\floor{p/q}=q$ in this case.
    \end{proof}
\end{lemma}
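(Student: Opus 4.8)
The plan is to separate the two assertions: first the combinatorial identification of $qx$ as the $p$-th return, then the identification of the return map with an (orientation-reversed, rescaled) rotation by $T_p(x)$. Throughout I work with the rotation $y\mapsto y+x \bmod 1$ and the orbit $0,x,2x,\dots$ of the origin.

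For the first part I would start from \Cref{lem: unique candidate pairs}: an even candidate pair $p,q$ satisfies $p<qx<p+x$, equivalently $\{qx\}<x$, so that $qx\bmod 1$ lies in $[0,x)$ and $\floor{qx}=p$. Thus $qx$ is \emph{a} landing of the orbit in $[0,x)$, and it remains to count which one it is. The key observation is that for irrational $x$ one has $\{kx\}<x$ precisely when the half-open interval $((k-1)x,kx]$ contains an integer, which (as its length is $x<1$) happens exactly when $\floor{kx}-\floor{(k-1)x}=1$. Summing the indicator over $1\le k\le q$ then telescopes:
\[
\sum_{k=1}^{q}\mathbbm{1}[\{kx\}<x]=\sum_{k=1}^{q}\bigl(\floor{kx}-\floor{(k-1)x}\bigr)=\floor{qx}=p,
\]
so $qx$ is exactly the $p$-th landing in $[0,x)$, i.e.\ the $p$-th return of the origin (the initial point $k=0$ not being counted).

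For the second part I would invoke the standard fact (a form of the three-distance theorem) that inducing the rotation on the subinterval $[0,x)$ yields an exchange of two intervals, hence, after rescaling $[0,x)$ to unit length, a rotation of the circle. Its $p$-th iterate — the map sending each point to its $p$-th return — is again a rotation, hence determined by its effect on a single point, so it suffices to track the origin. Its $p$-th return sits at $\{qx\}=qx-p$, and rescaling by $1/x$ while reversing orientation sends the origin to the reference point $0$ and this image to
\[
1-\frac{qx-p}{x}=\frac{p}{x}-(q-1)=\frac{p}{x}-\floor{\frac{p}{x}}=T_p(x),
\]
using $\floor{p/x}=q-1$ (divide $p<qx<p+x$ by $x$). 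This is the length computation $p+x-qx=xT_p(x)$ reorganized, and it pins down the map as rotation by $T_p(x)$ in the rescaled, orientation-reversed coordinate.

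The odd case runs in parallel after replacing $[0,x)$ by $[1-x,1)$: here an odd candidate satisfies $p-x<qx<p$, so $\{qx\}>1-x$ and $\floor{qx}=p-1$, and one checks $\{kx\}>1-x\iff\floor{(k+1)x}-\floor{kx}=1$, whose telescoped sum over $1\le k\le q$ equals $\floor{(q+1)x}=p$; the analogous rescaling and reversal, now using $\floor{p/x}=q$, again yields $T_p(x)$. The main thing to get right — and the only place the argument is more than bookkeeping — is coordinating the two descriptions of the return map: the structural claim that it is a rotation (which is what licenses the phrase ``rotation by $T_p(x)$'' at all, rather than merely computing a single orbit point) together with the computational claim fixing the rotation amount, all while keeping careful track that the stated amount is the one \emph{after} the orientation reversal built into the rescaling.
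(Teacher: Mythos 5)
Your proposal is correct, and its computational core --- dividing $p<qx<p+x$ by $x$ to get $\floor{p/x}=q-1$, then identifying the reversed, rescaled displacement $p+x-qx=x\bigl(\frac{p}{x}-(q-1)\bigr)=xT_p(x)$ --- is exactly the paper's calculation (the paper's ``$\floor{p/q}=q$'' in the odd case is a typo for $\floor{p/x}=q$, which you state correctly). Where you genuinely diverge is in supplying two justifications the paper leaves implicit. First, the paper disposes of the ``$qx$ is the $p$-th return'' claim with ``direct from Lemma~3.3,'' which really means: the even candidates are exactly the pairs $\bigl(p,\floor{p/x}+1\bigr)$, so the returns are enumerated bijectively and monotonically by $p$. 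Your telescoping count $\sum_{k=1}^{q}\mathbbm{1}[\{kx\}<x]=\sum_{k=1}^{q}\bigl(\floor{kx}-\floor{(k-1)x}\bigr)=\floor{qx}=p$ is a self-contained alternative that avoids leaning on the lemma's uniqueness statement, and the odd-case variant (summing to $\floor{(q+1)x}=p$) checks out as well. Second, you make explicit that the first-return map to $[0,x)$ is a two-interval exchange, hence a rotation after rescaling, so that its $p$-th iterate is again a rotation and is pinned down by one orbit point; the paper's statement simply asserts ``may be considered as rotation by $T_p(x)$'' and only computes the displacement of the single orbit of the origin. You are right that this structural fact is what licenses the word ``rotation,'' and flagging it is the one place your write-up is genuinely more careful than the original rather than merely more verbose.
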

See \Cref{fig:scale of Tp and candidates} for a visual presentation of these candidate pairs with the scale of the induced rotation marked.
\begin{figure}[bht]
\center{\begin{tikzpicture}[scale=6]
\draw[|-|] (0,0)--(0.4,0);
\draw[|-|] (0.4,0)--(1,0);
\draw[|-|] (1.5,0)--(2.1,0);
\draw[|-|] (2.1,0)--(2.5,0);
\node[below]at(0,0){$p$};
\node[below]at(1,0){$p+x$};
\node[below]at(1.5,0){$p-x$};
\node[below]at(2.5,0){$p$};
\node[below]at(0.4,0){$qx$};
\node[below]at(2.1,0){$qx$};
\draw[->] (1,.05) to[bend right=15] node[above]{$xT_p(x)$}(0.4,.05);
\draw[->] (2.5,.05) to[bend right=15] node[above]{$xT_p(x)$}(2.1,.05);
\end{tikzpicture}
}
\caption{\label{fig:scale of Tp and candidates}Those $p,q$ which represent even and odd candidate pairs, respectively, with $T_p(x)$ the (scaled, reversed orientation) rotation of the $q$-th return to the respective interval modulo one.}
\end{figure}
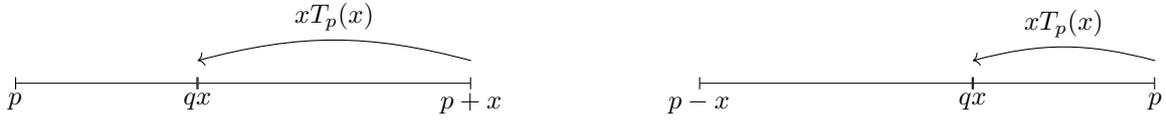
Before proceeding to a classification of which candidate $q = \floor{p/x}, \floor{p/x}+1$, or $p=\floor{qx}-1, \floor{qx}$ are actual convergents for some PCF expansion it will be useful to recall some existing terminology:

The \textit{Beatty sequence} of an irrational $r$ is the set of all integer parts $\floor{nr}$ for positive integers $n$. It follows that if $\floor{p/x}=q$, then if we set $a_1=p$, we have $b_1=q_1=q$: the Beatty sequence of $1/x$ is exactly the set of all possible $q_1$ of $x$. We may now quickly show that \emph{every} odd candidate pair is actually a convergent of odd index, specifically of index one:

\begin{proposition}\label{prop:beatty is return times}Every $p,q$ which is an odd candidate pair is some $p_1,q_1$ for a PCF expansion of $x$.
\end{proposition}
\begin{proof}
    From \Cref{lem: unique candidate pairs} for any odd candidate pair we must have $q=\floor{\frac{p}{x}}$, i.e. $q$ is the $p$-th term in the Beatty sequence of $1/x$.
\end{proof}
We therefore have a dynamic interpretation of the Beatty sequence of $1/x$: return times of the origin to the interval $[1-x,1)$ under rotation by $x$ modulo one. We similarly derive a dynamic interpretation of \textit{Rayleigh's Theorem}, which classifies exactly when two Beatty sequences form a partition of the positive integers:
\begin{theorem}[Rayleigh's Theorem]\label{thm:Rayleigh's Theorem}
    The Beatty sequences of $1/r,1/s$ partition the positive integers if and only if $s=1-r$.
    \begin{proof}
        The Beatty sequence of $1/r$ are those $n$ so that $\{nr\} \in (1-r,1)$. By reversing the direction of our rotation, we see that $\{nr\} \in [1-r,1)$ if and only if $\{n(1-r)\} \in (0,r]$ for nonzero integers $n$. Elementary considerations regarding the density of irrational rotations immediately show this set to be the exact complement of the Beatty sequence of $1/s$ if and only if $s = 1-r$.
    \end{proof}
\end{theorem}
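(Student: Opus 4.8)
The plan is to convert the purely combinatorial statement about Beatty sequences into a statement about fractional parts of the rotation by $r$, exactly in the spirit of the dynamic reinterpretation of Beatty sequences recorded in \Cref{prop:beatty is return times}. First I would establish the defining fractional-part description: a positive integer $m$ lies in the Beatty sequence of $1/r$ precisely when $\{mr\}\in(1-r,1)$. Indeed, $m=\floor{n/r}$ for some $n$ iff the interval $[mr,(m+1)r)$ contains an integer; since this interval has length $r<1$ and $mr$ is irrational, the only candidate is $\floor{mr}+1$, which lies in the interval iff $1-\{mr\}<r$, i.e. iff $\{mr\}>1-r$. This reduces the theorem to comparing two explicit subsets of $\NN$ cut out by fractional-part conditions.

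The heart of the argument is then the reflection identity $\{m(1-r)\}=1-\{mr\}$, valid for every positive integer $m$ because $mr\notin\mathbb{Z}$. I would use it to compute the complement of the Beatty sequence of $1/r$ inside $\NN$: since $\{mr\}$ equals neither $0$ nor $1-r$ (both would force $(m\!+\!1)r$ or $mr$ to be an integer), the complement is exactly $\{m:\{mr\}\in(0,1-r)\}$, which the reflection rewrites as $\{m:\{m(1-r)\}\in(r,1)\}$. On the other hand, by the description of the first paragraph the Beatty sequence of $1/s$ is $\{m:\{ms\}\in(1-s,1)\}$. Setting $s=1-r$ turns the interval $(1-s,1)$ into $(r,1)$, so the Beatty sequence of $1/s$ coincides term-for-term with the complement of the Beatty sequence of $1/r$; the two sequences therefore partition $\NN$, settling the ``if'' direction.

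For the converse I would invoke equidistribution: by Weyl's theorem the set $\{m:\{mr\}\in(1-r,1)\}$ has natural density equal to the length $r$ of its defining interval, and likewise the Beatty sequence of $1/s$ has density $s$. If the two sequences partition $\NN$ then disjointness forces $r+s\le 1$ and covering forces $r+s\ge 1$, whence $r+s=1$, i.e. $s=1-r$. Alternatively one could replace this by a direct counting argument on each initial segment $\{1,\dots,N\}$.

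The main obstacle here is bookkeeping rather than a genuine difficulty: one must treat the interval endpoints consistently (open versus half-open) and verify that the orbit $\{mr\}$ never lands on any of the boundary values $0$, $1-r$, $r$, $1-s$. All of these exclusions follow immediately from the irrationality of $r$ and $s$, so the open/half-open distinction is immaterial and the reflection identity holds with no exceptional cases. The only part carrying genuine content beyond this is the density computation underlying the ``only if'' direction, and this is standard.
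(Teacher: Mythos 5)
Your proposal is correct and follows essentially the same route as the paper: characterize membership in the Beatty sequence of $1/r$ by the condition $\{mr\}\in(1-r,1)$, apply the reflection $\{m(1-r)\}=1-\{mr\}$ to identify the complement, and close the converse with a density consideration. You merely spell out what the paper compresses into ``elementary considerations regarding the density of irrational rotations,'' using Weyl equidistribution for the only-if direction and checking the endpoint cases explicitly.
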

It is worth mentioning that such pairs of Beatty sequences that partition the integers play a role in invariant and dual subtraction games, a class of games that includes Wythoff's game. In particular, every complementary pair of homogeneous Beatty sequences represents the solution to an invariant impartial game (see \cite{LHFA} and the references therein). 

\vspace{1em}

From Theorem \ref{thm:Rayleigh's Theorem} We obtain an immediate corollary:
\begin{corollary}\label{cor:Rayleigh reference}
    Every positive integer is either a possible $q_1$ of $x$ or a possible $q_1$ of $(1-x)$, but not both.
\end{corollary}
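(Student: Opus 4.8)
The plan is to recognize both families of possible $q_1$ values as concrete Beatty sequences and then invoke Rayleigh's Theorem (\Cref{thm:Rayleigh's Theorem}) directly. First I would recall from the discussion immediately preceding \Cref{prop:beatty is return times} that the set of all possible $q_1$ of $x$ is exactly the Beatty sequence of $1/x$: if $\floor{p/x}=q$ then choosing $a_1=p$ forces $b_1=q_1=q$, and conversely every $q_1$ arises this way. Since $x$ irrational forces $1-x$ to be irrational and lie in $(0,1)$ as well, the identical argument applied to $1-x$ shows that the set of possible $q_1$ of $1-x$ is the Beatty sequence of $1/(1-x)$.

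Next I would apply \Cref{thm:Rayleigh's Theorem} with $r=x$ and $s=1-x$. The hypothesis $s=1-r$ holds exactly, so the theorem tells us that the Beatty sequences of $1/x$ and $1/(1-x)$ partition the positive integers. By the identifications of the previous step, this partition is precisely a partition of the positive integers into the possible $q_1$ of $x$ and the possible $q_1$ of $1-x$.

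Finally, unwinding the word ``partition'': every positive integer lies in exactly one of the two Beatty sequences, which is the assertion that each positive integer is a possible $q_1$ of either $x$ or $1-x$, and never of both. This completes the argument.

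Since all the substantive work has already been carried out—the dynamical identification of possible $q_1$ with a Beatty sequence, and the proof of Rayleigh's Theorem itself—there is no genuine computation remaining. The only point requiring any care, and hence the closest thing to an obstacle, is bookkeeping: verifying that the parameterization in \Cref{thm:Rayleigh's Theorem} matches our situation (taking $r=x$, $s=1-x$ so that $s=1-r$), and confirming that both $x$ and $1-x$ are legitimate irrationals in $(0,1)$ so that the Beatty-sequence description of the possible $q_1$ applies verbatim to each.
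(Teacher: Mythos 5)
Your proposal is correct and follows exactly the route the paper intends: identify the possible $q_1$ of $x$ and of $1-x$ with the Beatty sequences of $1/x$ and $1/(1-x)$ respectively (as established just before \Cref{prop:beatty is return times}), then apply \Cref{thm:Rayleigh's Theorem} with $r=x$, $s=1-x$. The paper treats this as immediate and gives no further detail, so there is nothing to add.
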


So the Beatty sequence of $1/x$ exactly classifies all possible $q_1$, and furthermore \emph{any} odd convergent of a PCF expansion of $x$ must appear as some $p_1/q_1$, but what about other $q_n$? Does something similar hold for even indices? There is a strong relationship between possible $q_n$ and possible $q_{n+2}$:
\begin{theorem}
    \label{thm:convergents subset}
    If $q=q_{k+2}$ for some PCF expansion of $x$, then $q=q_k$ for a different PCF expansion of $x$.
    \begin{proof}
        Suppose that $x=[\ldots,a_i/b_i,\ldots]$ with remainders $x_i=[a_{i+1}/b_{i+1},\ldots]$. Let $q=q_{k+2}$ for this PCF expansion of $x$. We have each of the following:
        \[\frac{a_k}{x_k} = b_k +x_{k+1}, \qquad b_{k+1} = \frac{a_{k+1}}{x_{k+1}}-x_{k+2}, \qquad a_{k+2}-b_{k+2}x_{k+2}= x_{k+2}x_{k+3}\]
        
        By applying \eqref{eq:rec for pn and qn} several times we find:
        \[q=q_{k+2}=\left(b_k \left(b_{k+2}b_{k+1}+a_{k+2} \right)+b_{k+2}a_{k+1} \right)q_{k-1} + a_k\left(b_{k+2}b_{k+1}+a_{k+2} \right)q_{k-2}\]
        Now create an alternate PCF expansion of $x=[\ldots,a_i'/b_i',\ldots]$ by setting $a_i'=a_i$, $b_i'=b_i$ for all $i \leq k-1$. If we set $a_k'=a_k(b_{k+2}b_{k+1}+a_{k+2})$, we compute $b_{k}'$ as (using our initial observations above):
        \begin{align*}
            b_k' &= \floor{\frac{a_k'}{x_k}}\\
            &=\floor{\frac{a_k}{x_k}\left(b_{k+2}b_{k+1}+a_{k+2}\right)}\\
            &=b_k \left(b_{k+2}b_{k+1}+a_{k+2}\right) + \floor{x_{k+1}\left(b_{k+2}b_{k+1}+a_{k+2}\right)}\\
            &=b_k \left(b_{k+2}b_{k+1}+a_{k+2}\right) + \floor{x_{k+1} \left(b_{k+2} \left(\frac{a_{k+1}}{x_{k+1}}-x_{k+2}\right)+a_{k+2} \right)}\\
            &=b_k \left(b_{k+2}b_{k+1}+a_{k+2}\right)+b_{k+2}a_{k+1} + \floor{x_{k+1}\left(a_{k+2}-b_{k+2}x_{k+2} \right)}\\
            &=b_k \left(b_{k+2}b_{k+1}+a_{k+2}\right)+b_{k+2}a_{k+1} + \floor{x_{k+1}x_{k+2}x_{k+3}}\\
            &=b_k \left(b_{k+2}b_{k+1}+a_{k+2}\right)+b_{k+2}a_{k+1}
        \end{align*}
        where the last line follows as each $x_i<1$, so $\floor{x_{k+1}x_{k+2}x_{k+3}}=0$. With this value of $b_k'$ we find that for our modified PCF expansion of $x$:
        \[q_{k} = b_k' q_{k-1}+a'_kq_{k-2}=q \qedhere\]
    \end{proof}
\end{theorem}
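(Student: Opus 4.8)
The plan is to leave the first $k-1$ terms of the given PCF expansion untouched and to \emph{collapse} the three recurrence steps at indices $k,k+1,k+2$ into a single step driven by a new numerator $a_k'$. Since the factors $B_{a_i,b_i}$ for $i\leq k-1$ are retained, the modified expansion shares the same $p_{k-2},q_{k-2},p_{k-1},q_{k-1}$ and the same remainder feeding into stage $k$. Because $q_k' = b_k' q_{k-1}+a_k' q_{k-2}$ by \eqref{eq:rec for pn and qn}, it then suffices to exhibit a numerator $a_k'$ for which the PCF-forced denominator coefficient $b_k'$ makes this quantity equal to $q_{k+2}=q$.

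First I would unroll \eqref{eq:rec for pn and qn} three times to express the target purely in terms of the retained denominators, writing $q_{k+2}=C_1 q_{k-1}+C_2 q_{k-2}$ with $C_2=a_k(b_{k+1}b_{k+2}+a_{k+2})$ and $C_1=b_k(b_{k+1}b_{k+2}+a_{k+2})+b_{k+2}a_{k+1}$. The natural move is to match these coefficients directly: set $a_k'=C_2$, so that the $q_{k-2}$-contribution is automatically correct, and then hope the forced $b_k'$ equals $C_1$. The essential subtlety — and the entire content of the theorem — is that $b_k'$ is \emph{not} a free parameter: the PCF algorithm dictates $b_k'=\floor{a_k'/x_k}$ (in the remainder bookkeeping where $b_i=\floor{a_i/x_i}$ and $a_i/x_i=b_i+x_{i+1}$). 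So the crux is to verify that this floor evaluates to exactly $C_1$.

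The main obstacle is therefore this single floor computation. I would substitute $a_k'/x_k=(b_k+x_{k+1})(b_{k+1}b_{k+2}+a_{k+2})$ via the first remainder identity, pull out the integer part $b_k(b_{k+1}b_{k+2}+a_{k+2})$, and reduce to evaluating $\floor{x_{k+1}(b_{k+1}b_{k+2}+a_{k+2})}$. Applying $b_{k+1}x_{k+1}=a_{k+1}-x_{k+1}x_{k+2}$ and then $a_{k+2}-b_{k+2}x_{k+2}=x_{k+2}x_{k+3}$ rewrites the bracketed quantity as $b_{k+2}a_{k+1}+x_{k+1}x_{k+2}x_{k+3}$. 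Since each remainder lies in $(0,1)$, the product of three of them is again in $(0,1)$ and contributes nothing to the floor; this is the one genuinely delicate point, since it is exactly what prevents the forced $b_k'$ from overshooting $C_1$. Hence $b_k'=b_k(b_{k+1}b_{k+2}+a_{k+2})+b_{k+2}a_{k+1}=C_1$ precisely.

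Finally I would assemble $q_k'=b_k' q_{k-1}+a_k' q_{k-2}=C_1 q_{k-1}+C_2 q_{k-2}=q_{k+2}=q$, completing the identification. Two routine checks remain: that the modified stage is a legitimate PCF step, which is immediate since $b_k\geq a_k$ forces $b_k'=b_k(b_{k+1}b_{k+2}+a_{k+2})+b_{k+2}a_{k+1}\geq a_k(b_{k+1}b_{k+2}+a_{k+2})=a_k'$; and that the expansion can be continued past index $k$ to a genuine PCF expansion of $x$, which follows from Leighton's existence theorem applied to the new remainder (which, as a consistency check, works out to be exactly $x_{k+1}x_{k+2}x_{k+3}$). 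Since $a_k'=a_k(b_{k+1}b_{k+2}+a_{k+2})>a_k$, the resulting expansion is genuinely different, as required.
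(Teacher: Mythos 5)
Your proposal is correct and follows essentially the same route as the paper's own proof: set $a_k'=a_k(b_{k+1}b_{k+2}+a_{k+2})$, unroll the recurrence to identify the target coefficients, and verify via the remainder identities that the forced $b_k'=\floor{a_k'/x_k}$ picks up exactly $b_k(b_{k+1}b_{k+2}+a_{k+2})+b_{k+2}a_{k+1}$ because the leftover term $x_{k+1}x_{k+2}x_{k+3}$ lies in $(0,1)$. Your added checks that $b_k'\geq a_k'$ and that the new remainder equals $x_{k+1}x_{k+2}x_{k+3}$ are correct and in fact make explicit what the paper defers to \Cref{rem: after convegents subset theorem}.
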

\begin{remark}\label{rem: after convegents subset theorem}
    In the above we necessarily have: $b_k \geq a_k$, $b_{k+1} \geq a_{k+1}$, $b_{k+2}\geq a_{k+2}$. Furthermore, if we set
    \begin{align*}
        x&=[\ldots,a_i/b_i,\ldots,a_k/b_k,a_{k+1}/b_{k+1},a_{k+2}/b_{k+2},x_{k+2}]\\
        &=[\ldots,a_i/b_i,\ldots,(a_k(b_{k+1}b_{k+2}+a_{k+2}))/(b_k(b_{k+1}b_{k+2}+a_{k+2})+a_{k+1}b_{k+2}),x'_k]
    \end{align*}
    then by setting the tails equal to each other:
    \[\frac{a_k(b_{k+1}b_{k+2}+a_{k+2})}{b_k(b_{k+1}b_{k+2}+a_{k+2})+a_{k+1}b_{k+2}+x'_k} = \cfrac{a_k}{b_k+\cfrac{a_{k+1}}{b_{k+1}+\cfrac{a_{k+2}}{b_{k+2}+x_{k+2}}}}\]
    By cancelling the shared factor of $a_k$ and solving for $x_{k+2}$:
    \[x_{k+2}= \frac{x'_k(b_{k+1}b_{k+2}+a_{k+2})}{a_{k+1}a_{k+2}-b_{k+1}x'_k}.\]
    Since we have $x_{k+2}<1$, we derive additionally:
    \[x'_k <\frac{a_{k+1}a_{k+2}}{b_{k+1}(b_{k+2}+1)+a_{k+2}}.\]
    It now follows that $x'_k<\frac{a_{k+1}a_{k+2}}{b_{k+1}}$, which ensures the (already known) $x_{k+2}>0$.
\end{remark}

\begin{corollary}\label{cor:which q appear later}
    If $q=q_{k}$, with $k \geq 1$, for some expansion $[\ldots,a_i/b_i,\ldots,a'_k/b'_k,x'_k]$, then $q=q_{k+2}$ for some expansion $[\ldots,a_i/b_i,\ldots,a_k/b_k,a_{k+1}/b_{k+1},a_{k+2}/b_{k+2},x_{k+2}]$ if and only if we may find positive integers $a_k,a_{k+1},a_{k+2},b_k,b_{k+1},b_{k+2}$ which solve:
    \begin{itemize}
        \item $b_k \geq a_k$, $b_{k+1} \geq a_{k+1}$, $b_{k+2} \geq a_{k+2}$,
        \item $a'_k = a_k(b_{k+1}b_{k+2}+a_{k+2})$,
        \item $b'_k = b_k(b_{k+1}b_{k+2}+a_{k+2})+a_{k+1}b_{k+2}$,
        \item $x'_k < (a_{k+1}a_{k+2})/(b_{k+1}(b_{k+2}+1)+a_{k+2})$
    \end{itemize}
    \begin{proof}
        In light of \Cref{thm:convergents subset} and \Cref{rem: after convegents subset theorem}, if such an expansion exists for which $q=q_{k+2}$, then all listed conditions are met. If we assume that such terms exist and all conditions are met, then we know
        \[x = [\ldots,a_i/b_i,\ldots,a_k/b_k,a_{k+1}/b_{k+1},a_{k+2}/b_{k+2},x_{k+2}].\]
        The first condition ensures that this is a valid PCF expansion of $x$ insofar as the $a_i/b_i$ are concerned; we still need to establish that $0<x_{k+2}<1$. If we then construct the tail
        \[\cfrac{a_k}{b_k+\cfrac{a_{k+1}}{b_{k+1}+\cfrac{a_{k+2}}{b_{k+2}+x_{k+2}}}},\]
        replicating the work of \Cref{rem: after convegents subset theorem} we set this tail equal to some
        \[ \frac{a'_k}{b'_k + x'_k},\]
        and the final condition will allow us to conclude that $0<x_{k+2}<1$, making our longer PCF expansion for $x$ valid.
    \end{proof}
\end{corollary}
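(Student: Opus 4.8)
The plan is to prove the two implications separately. The forward implication repackages \Cref{thm:convergents subset} together with \Cref{rem: after convegents subset theorem}, while the reverse implication carries out the genuinely new reconstruction.

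For the forward direction I would begin with a $(k+2)$-term expansion $[\ldots,a_i/b_i,\ldots,a_k/b_k,a_{k+1}/b_{k+1},a_{k+2}/b_{k+2},x_{k+2}]$ that realizes $q=q_{k+2}$ and shares the prefix $a_i/b_i$ (for $i<k$) with the given $k$-term expansion, and take the six integers $a_k,\ldots,b_{k+2}$ appearing in it as the candidate witnesses. Validity of this expansion gives the three inequalities $b_k\geq a_k$, $b_{k+1}\geq a_{k+1}$, $b_{k+2}\geq a_{k+2}$ for free. The collapse of \Cref{thm:convergents subset} turns this into a $k$-term expansion of $x$ with the same prefix whose $k$-th digit is exactly $a_k(b_{k+1}b_{k+2}+a_{k+2})$ over $b_k(b_{k+1}b_{k+2}+a_{k+2})+a_{k+1}b_{k+2}$; since this collapse is the given $k$-term expansion, reading off the $k$-th digit produces the two required equalities, and the remaining inequality on $x'_k$ is precisely the bound already distilled from $x_{k+2}<1$ in \Cref{rem: after convegents subset theorem}.

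For the reverse direction I would assume the six integers satisfy all four listed conditions and assemble the candidate expansion $[\ldots,a_i/b_i,\ldots,a_k/b_k,a_{k+1}/b_{k+1},a_{k+2}/b_{k+2},x_{k+2}]$. The first condition makes each appended digit $a_j/b_j$ a legal PCF digit, so the only thing left to verify is $0<x_{k+2}<1$. Here I would run the computation of \Cref{rem: after convegents subset theorem} in reverse: equating the three-level tail with $a'_k/(b'_k+x'_k)$, cancelling the common factor $a_k$ via the two equalities, and solving gives
\[x_{k+2}=\frac{x'_k(b_{k+1}b_{k+2}+a_{k+2})}{a_{k+1}a_{k+2}-b_{k+1}x'_k}.\]
The numerator is positive, and the fourth condition rearranges to $x'_k\bigl(b_{k+1}(b_{k+2}+1)+a_{k+2}\bigr)<a_{k+1}a_{k+2}$, which is at once the inequality $x_{k+2}<1$ and, because $b_{k+1}(b_{k+2}+1)+a_{k+2}>b_{k+1}$, enough to force the denominator positive so that $x_{k+2}>0$. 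Once $x_{k+2}\in(0,1)$ is in hand the expansion is a bona fide PCF expansion of $x$, and a single pass through the recurrence \eqref{eq:rec for pn and qn} returns $q_{k+2}=q$.

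The step I expect to be the crux is the verification that $0<x_{k+2}<1$ in the reverse direction: the whole point of the fourth condition is that it must simultaneously deliver the upper bound $x_{k+2}<1$ and, through the comparison $b_{k+1}(b_{k+2}+1)+a_{k+2}>b_{k+1}$, the positivity of the denominator that yields $x_{k+2}>0$. A secondary point of care, in the forward direction, is the identification of the collapse supplied by \Cref{thm:convergents subset} with the given $k$-term expansion, so that the two equalities genuinely reference the prescribed $a'_k$ and $b'_k$ rather than some other expansion sharing the same $q_k$. Both points, however, are already implicit in the work done for \Cref{thm:convergents subset} and \Cref{rem: after convegents subset theorem}, read in the two opposite directions.
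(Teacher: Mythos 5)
Your proposal is correct and follows essentially the same route as the paper: the forward implication is read off from \Cref{thm:convergents subset} and \Cref{rem: after convegents subset theorem}, and the reverse implication reassembles the three-level tail and verifies $0<x_{k+2}<1$ by solving for $x_{k+2}$ exactly as in the remark. You in fact make explicit the two points the paper leaves implicit, namely that the fourth condition simultaneously yields $x_{k+2}<1$ and the positivity of the denominator $a_{k+1}a_{k+2}-b_{k+1}x'_k$, and that the collapsed expansion must be identified with the given one.
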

\Cref{thm:convergents subset} ensures that any $q$ which appears as some denominator may appear as a denominator with lesser index but the same parity. \Cref{cor:which q appear later} places restrictions on which $q$ may appear as a $q_k$ with \textit{larger} index than already found (and same parity), providing a way to determine which subset of possible $q_k$ may appear as $q_{k+2}$. We have already remarked that the set of possible $q_{2k+1}$ is exactly the Beatty sequence of $1/x$, which is also the set of all possible $q_1$. We now also know that any $q_{2n+2}$ must appear as a possible $q_2$. 

So what can we say about the set of possible $q_2$ compared to the possible $q_1$? We observed in \Cref{lem: unique candidate pairs} that the even candidates are exactly pairs of the form $p, \floor{p/x}+1$. So suppose we set this value of $q$; we have a $p,q$ which is an even candidate. Does that automatically imply that there is some $PCF$ expansion which \emph{actually achieves} this pair as $p_2,q_2$?

Suppose first that $x$ has PCF expansion beginning $x=[0;a_1/b_1,a_2/b_2,\ldots]$. Then
\begin{equation}\label{eqn: stupid p2 and q2 reference}
    p_2 = a_1 b_2, \qquad q_2 = b_1 b_2 +a_2.
\end{equation}

The classification of which even candidate pairs $p,q$ may actually appear as some $p_2,q_2$ is surprisingly different from the case of odd index (where all candidates do appear) and also elegant in the restriction imposed:

\begin{theorem}\label{thm: nice even candidate theorem}
    For $p,q$ an even candidate pair, there is some PCF expansion of $x$ for which $p=p_2$ and $q=q_2$ if and only there is some $a$ a factor of $p$ such that
    \[T_p(x) + T_a(x) > 1.\]
    \begin{proof}
    If $p,q$ is an even candidate pair, then if there is some PCF expansion for which $p_2=p$, by \Cref{lem: unique candidate pairs} we must have $q_2=q$: it suffices just to find a PCF expansion for which $p=p_2$.
    
    First we will show that for such an even candidate pair there is such a PCF expansion if and only if for $a$ a factor of $p$ we have $p/a$ is in the Beatty sequence of $T_a(x)$. Suppose first that $p=p_2$. In light of \eqref{eqn: stupid p2 and q2 reference} we must have $a_1=a$ a factor of $p$, in which case we compute
        \[b_1 = \floor{\frac{a}{x}}, \qquad x'=T_{a}(x).\]

    Then we must find some $a_2$ so that $b_2 = p/a$, and also $b_2 = \floor{a_2/x'}$. In other words, $p/a$ must be in the Beatty sequence of $T_{a}(x)$.

    Conversely, if we may find some $a$ a factor of $p$ so that $p/a$ is in the Beatty sequence of $T_a(x)$, we simply pick $a=a_1$, then pick $a_2$ so that $b_2 = p/a$, in which case we similarly get $p=p_2$. 
        
    So that the existence of such a PCF expansion is equivalent to the existence of some $a$ a factor of $p$ such that $p/a$ is in the Beatty sequence of $T_a(x)$, i.e. from \Cref{prop:beatty is return times} there is some positive integer $a_2$ so that
    \begin{equation}\label{eqn: part of even candidate classification} \frac{p}{a}T_a(x) < a_2 < \left(\frac{p}{a}+1\right)T_a(x).\end{equation}
      
    For $p,q$ to be an even candidate we must have
    \[p<qx<p+x,\]
    where we divide by $x$ and subtract $(p/a)\floor{a/x}$ to obtain:
    \[ \frac{p}{a}T_a(x) < q-\frac{p}{a}\floor{\frac{a}{x}}<\frac{p}{a}T_a(x)+1.\]
    Therefore we must have $a_2=q-(p/a)\floor{a/x}$. So the left inequality in \eqref{eqn: part of even candidate classification} is satisfied only for this choice of $a_2$, but is then satisfied automatically for this choice of $a_2$ for any factor of $p$. Our problem is therefore equivalent to finding such an $a$ so that
    \begin{align*}
        q-\frac{p}{a}\floor{\frac{a}{x}}&<\left(\frac{p}{a}+1\right)T_a(x)\\
        q&< \frac{p}{a}\left(T_a(x)+\floor{\frac{a}{x}}\right) + T_a(x)\\
        q&<\frac{p}{x}+T_a(x)\\
        \intertext{Recall that for an even candidate $p,q$ we must have $q = \floor{p/x}+1$, so:}
        1&< \frac{p}{x}-\floor{\frac{p}{x}}+T_a(x)\\
        1&<T_p(x) + T_a(x)\qedhere
    \end{align*} 
    \end{proof}
\end{theorem}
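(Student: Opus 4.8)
The plan is to first reduce the two-sided requirement $p=p_2$, $q=q_2$ to the single requirement $p=p_2$. Any convergent is a candidate by \Cref{thm:proper convergent approximation}, and since $c_2<x$ this convergent is necessarily an \emph{even} candidate; by the uniqueness in \Cref{lem: unique candidate pairs} the denominator accompanying $p_2=p$ is then forced to be $q=\floor{p/x}+1$. Hence it suffices to produce \emph{any} PCF expansion of $x$ whose second numerator equals $p$, and I would phrase the whole argument as characterizing exactly when such an expansion exists.

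Next I would unwind what $p_2=p$ demands of the initial data. From \eqref{eqn: stupid p2 and q2 reference} we have $p_2=a_1 b_2$, so necessarily $a_1=a$ is a factor of $p$ and $b_2=p/a$. Fixing $a_1=a$ determines $b_1=\floor{a/x}$ and the first remainder $x_1=T_a(x)$, so the only remaining freedom is the positive integer $a_2$, constrained by $b_2=\floor{a_2/x_1}=p/a$. By the dynamic Beatty-sequence interpretation of \Cref{prop:beatty is return times}, this is precisely the requirement that $a_2$ lie in the open interval $\big((p/a)T_a(x),\,(p/a+1)T_a(x)\big)$; since this interval has length $T_a(x)<1$, it contains at most one integer.

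The key simplifying observation is that $a_2$ is in fact pinned down: since the accompanying denominator must be $q$, expanding $q_2=b_1 b_2+a_2=\floor{a/x}\,(p/a)+a_2$ forces $a_2=q-(p/a)\floor{a/x}$. I would then verify that the even-candidate inequality $p<qx<p+x$, after dividing by $x$ and subtracting $(p/a)\floor{a/x}$ and using $\floor{a/x}+T_a(x)=a/x$, delivers the \emph{left} half $a_2>(p/a)T_a(x)$ of the interval condition automatically. Thus the only genuinely binding constraint is the right-hand inequality $q-(p/a)\floor{a/x}<(p/a+1)T_a(x)$.

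Finally I would simplify this residual inequality: collecting the mixed terms with $\floor{a/x}+T_a(x)=a/x$ collapses it to $q<p/x+T_a(x)$, and substituting $q=\floor{p/x}+1$ together with $T_p(x)=p/x-\floor{p/x}$ rearranges it to $1<T_p(x)+T_a(x)$. Existence of the desired expansion is therefore equivalent to finding a factor $a$ of $p$ with $T_p(x)+T_a(x)>1$. I expect the main obstacle to be not any single computation but the simultaneous bookkeeping of three facts—that $q_2=q$ is automatic (so $a_2$ is determined), that the left interval bound comes for free from the candidate condition, and that the surviving right bound is exactly the stated $T$-inequality; keeping the roles of $a$, $a_2$, and the two nested intervals straight is where care will be needed.
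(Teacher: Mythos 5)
Your proposal is correct and follows essentially the same route as the paper's own proof: reduce to achieving $p=p_2$ alone, observe $a_1=a$ must divide $p$ with $b_2=p/a$, translate the condition on $a_2$ into the interval $\bigl((p/a)T_a(x),(p/a+1)T_a(x)\bigr)$ via the Beatty-sequence interpretation, note that the candidate inequality pins down $a_2=q-(p/a)\floor{a/x}$ and grants the left bound for free, and collapse the remaining right bound to $T_p(x)+T_a(x)>1$. The only cosmetic difference is that you pin down $a_2$ via the identity $q_2=b_1b_2+a_2$ rather than directly from the candidate inequality, which is an equivalent and equally clean bookkeeping choice.
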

\begin{corollary}\label{cor: cutoff for q2}
    If $p,q$ are an even candidate and $\{qx\}<\max\{x/2, xT(x)\}$ (where $T=T_1$ is the Gauss map), then there is a PCF expansion of $x$ for which $p=p_2$, $q=q_2$.
    \begin{proof}
        Refer to \Cref{fig:scale of Tp and candidates} for the scale of $T_p(x)$ in relation to $x$ itself: if $\{qx\}<x/2$, i.e. $p<qx<p+x/2$ and $\floor{p/x}=(q-1)$, then:
        \begin{align*}
            p+\frac{x}{2}&>qx\\
            \frac{p}{x}-(q-1)&>\frac{1}{2}\\
            T_p(x)&>\frac{1}{2}
        \end{align*}
        and we let $a=p$ in \Cref{thm: nice even candidate theorem}.

        Similarly, if $\{qx\}<xT(x)$:
        \begin{align*}
            p+xT(x) &>qx\\
            \frac{p}{x}-(q-1)+T(x)&>1\\
            T_p(x)+T_1(x)&>1\qedhere
        \end{align*}
    \end{proof}
\end{corollary}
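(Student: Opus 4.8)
The plan is to apply \Cref{thm: nice even candidate theorem}, which reduces the existence of a PCF expansion with $p=p_2$ and $q=q_2$ to the purely arithmetic-dynamical question of whether some factor $a$ of $p$ satisfies $T_p(x)+T_a(x)>1$. The two bounds appearing in the hypothesis, $x/2$ and $xT(x)$, are exactly the thresholds produced by the two most natural choices of factor, namely $a=p$ and $a=1$, so the strategy is simply to handle these two cases separately and in each case verify the inequality required by the theorem.

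First I would translate the hypothesis on $\{qx\}$ into a statement about $T_p(x)$. Since $p,q$ is an even candidate we have $p<qx<p+x$ with $\floor{p/x}=q-1$, and the preceding lemma (visualized in \Cref{fig:scale of Tp and candidates}) identifies the gap $p+x-qx$ as $xT_p(x)$. Because $p$ is an integer and $p<qx<p+x<p+1$, this gives $\floor{qx}=p$ and hence the single identity $\{qx\}=qx-p=x(1-T_p(x))$, on which both cases rest.

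In the case $\{qx\}<x/2$, this identity yields $x(1-T_p(x))<x/2$, hence $T_p(x)>1/2$; taking $a=p$ (a factor of itself) makes $T_a(x)=T_p(x)$, so $T_p(x)+T_a(x)=2T_p(x)>1$ and \Cref{thm: nice even candidate theorem} applies. In the case $\{qx\}<xT(x)$, the same identity gives $x(1-T_p(x))<xT_1(x)$, hence $T_p(x)+T_1(x)>1$; here I take $a=1$, which is always a factor of $p$, so that $T_a(x)=T_1(x)=T(x)$ and again the hypothesis of the theorem is met.

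There is no serious obstacle here: the real content is entirely contained in \Cref{thm: nice even candidate theorem}, and the only genuine step is recognizing that the two terms inside the maximum correspond precisely to the two extreme factor choices $a=p$ and $a=1$. The mild care needed is in setting up the identity $\{qx\}=x(1-T_p(x))$ correctly — in particular keeping track of the reversed orientation of the induced rotation, so that a larger value of $T_p(x)$ (a return closer to the left endpoint $p$) corresponds to a \emph{smaller} value of $\{qx\}$ — but this is immediate from the earlier lemma and \Cref{fig:scale of Tp and candidates}.
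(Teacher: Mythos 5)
Your proposal is correct and follows essentially the same route as the paper: both cases reduce to \Cref{thm: nice even candidate theorem} via the factor choices $a=p$ and $a=1$, and your identity $\{qx\}=x(1-T_p(x))$ is just a repackaging of the paper's direct manipulation of $p+x/2>qx$ and $p+xT(x)>qx$ using $\floor{p/x}=q-1$. No gaps.
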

\begin{remark}
    We suspect that the bounds in \Cref{cor: cutoff for q2} can not be improved. In other words, if we let $M=\max\{x/2,xT(x)\}$, for any interval proper subinterval $(\alpha,\beta) \subset (M,1)$ there exist $\{qx\}$ both where $q$ can appear as some $q_2$ in a PCF expansion of $x$, but also where $q$ can not. For such an interval there will be a positive density sequence of $q$ such that $\{qx\}$ lands in that interval. The corresponding values of $p$ ``should'' contain both $p$ with few factors such that all $T_a(x)< 1-T_p(x)$, but this sequence of $q$ ``should" also correspond to certain $p$ with very many factors, for which at least one $T_a(x)>1-T_p(x)$.
\end{remark}
\section{A Gauss Map Analogue}\label{sec: new map and ergodicity}
In this section we will find all proper continued fraction expansions of a given $x\in(0,1)$ dynamically. This can be done in various ways. Here, we add a second coordinate that will dictate which numerators to take in a natural way.
The map $T_a$ is the natural analogue to the Gauss map, but we need a way to associate some irrational $y \in (0,1)$ to any possible sequence of numerators. The numerators must be positive integers, and the Gauss map is already central to this consideration, so we propose pairing $x$ with some $y$ and using the \emph{regular} continued fraction partial quotients of $y$ as the numerators of $x$.

Specifically, let us define $T:[0,1]^2\rightarrow [0,1]^2$ as
\[
T(x,y) =
\begin{cases}
    \left(\frac{\floor{\frac{1}{y}}}{x} - \floor{\frac{\floor{\frac{1}{y}}}{x}}, \frac{1}{y} - \floor{\frac{1}{y}} \right) & \text{for } x\neq 0, y\neq 0, \\
    0 & \text{for } x=0 \text{ and or } y= 0.\\
\end{cases}
\]
Note that on the second coordinate we have the Gauss map. To see that for $x$ we indeed get expansions of the form \eqref{eq:propercf} let $(x,y)\in[0,1]^2$ and define $T^n(x,y)=(x_n,y_n)$. Furthermore, we define $a_1(y)= \floor{\frac{1}{y}}$, $b_1(x,y)= \floor{\frac{\floor{\frac{1}{y}}}{x}}$ and $a_n=a_1(y_{n-1})$, $b_n=b_1(x_{n-1},y_{n-1})$ for $n\geq 2$. Setting $x_0=x, y_0=y$, we have
\[
y_n=\frac{1}{y_{n-1}}-a_n \quad \text{ and } \quad x_n=\frac{a_n}{x_{n-1}}-b_n \quad  \text{ for } n\geq 1
\]
which gives
\begin{equation}\label{eq:inverses}
    y_{n-1}=\frac{1}{a_n+y_{n}} \quad \text{ and } \quad x_{n-1}=\frac{a_n}{b_n+x_{n}} \quad  \text{ for } n\geq 1.
\end{equation}
Using \eqref{eq:inverses} iteratively we find
\begin{equation*}
    y= \frac{1}{\displaystyle a_1+\frac{1}{\displaystyle a_2+\ddots \frac{1}{\displaystyle a_n+\ddots}}} \quad \text{ and } \quad  x= \frac{a_1}{\displaystyle b_1+\frac{a_2}{\displaystyle b_2+\ddots \frac{a_n}{\displaystyle b_n+\ddots}}}
\end{equation*}
where we stop, once we hit $0$\footnote{if $y_n=0$ and $x_n\neq 0$ for some $n$, then we did not find a continued fraction for $x$. This can only happen when $y$ is rational. This ambiguity does not affect our results and will be left out of the discussion.}. Since $x\in[0,1]$ we find that $\floor{\frac{\floor{\frac{1}{y}}}{x}}\geq  \floor{\frac{1}{y}} $ which gives $b_n\geq a_n$, and certainly $0 \leq x_n <1$, so we find a proper continued fraction of $x$. For $y$ we just develop its regular continued fraction expansion. Since any sequence of positive integers is a regular continued fraction of some $y\in[0,1]$, by using the map $T$ we can find all proper continued fraction expansions of $x$, including the regular continued fraction by taking $y=\frac{\sqrt{5}-1}{2}$. For the continued fraction of $y$ we use the shorthand notation $y=[0;a_1,a_2,\ldots]$ and for the corresponding continued fraction of $x$ we write $x=[0;a_1/b_1,a_2/b_2,\ldots ]_y$.

The map $T$ has cylinders which are rectangles in $[0,1]^2$. In the $y$-coordinate we simply have the cylinders of the Gauss map: intervals of the form $[1/(n+1),1/n]$ for $n\geq 1$. Assuming $1/(n+1) <y<1/n$, the cylinders in the $x$-coordinate are intervals of the form $[n/(m+1),n/m]$ for $m \geq n$. We denote the corresponding rectangle as $\Delta\binom{n}{m}$, those $(x,y)$ so that a proper continued fraction of $x$ begins $n/(m+\cdots)$. See \Cref{fig:slowmap} for a diagram. The map $T$ maps each of these rectangles $\Delta\binom{n}{m}$ $1:1$ back to the unit square.

\begin{figure}[bht]
\center{\begin{tikzpicture}[scale=10]
\draw (0,0)node[below]{$0$}--(1,0)node[below]{$1$}--(1,1)--(0,1)--(0,0);
\draw (1/2,0)node[below]{ $\frac{1}{2}$};
\draw (1/3,0)node[below]{ $\frac{1}{3}$};
\draw (1/4,0)node[below]{ $\frac{1}{4}$};
\node[below] at (0.65,0.8){\large $\Delta\binom{1}{1}$};
\node[below] at (0.42,0.8){ $\Delta\binom{1}{2}$};
\node[below] at (0.29,0.8){\tiny $\Delta\binom{1}{3}$};
\node[below] at (0.83,0.46){\large $\Delta\binom{2}{2}$};
\foreach \n in {2,...,15}{%
	\draw(0,1/\n)--(1,1/\n);
        \foreach \k in {0,...,15}{%
      \draw({(\n-1)/(\k+\n)},1/\n)--({(\n-1)/(\k+\n)},{1/(\n-1)});   
        }
}

\end{tikzpicture}
}

\caption{\label{fig:slowmap}The cylinders of $T$.}
\end{figure}
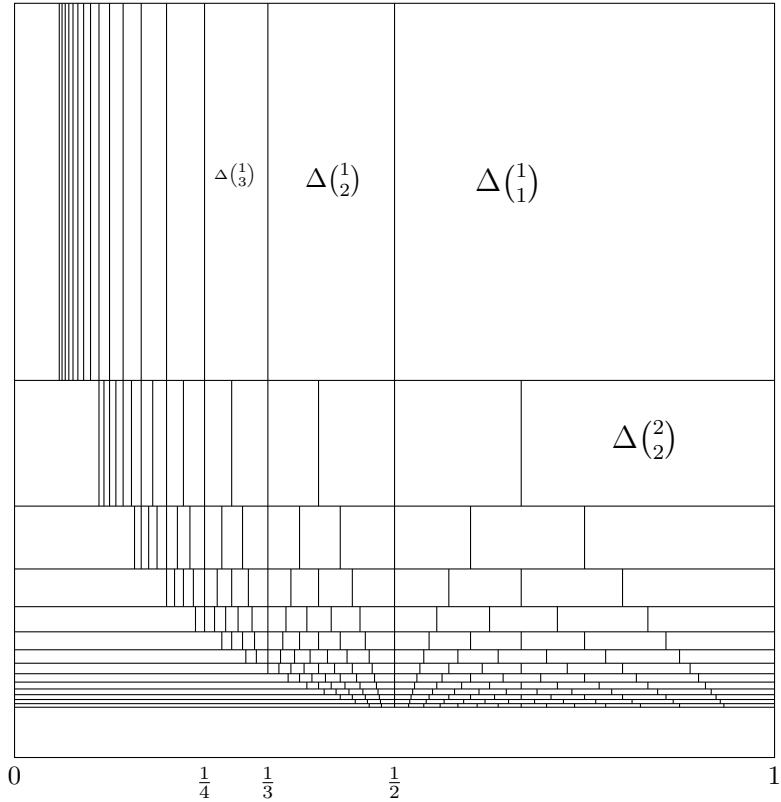

Note that for all $x\in(0,1)\backslash \mathbb{Q}$ we have uncountably many proper continued fractions.
\begin{theorem}
    The map $T(x,y)$ is ergodic with respect to a measure mutually absolutely continuous to Lebesgue measure. Moreover, it is even exponentially mixing.
    \begin{proof}
        We follow the setup of \cite{1078-0947_2005_4_639}[Theorem 1.1]. Similar to our use of this technique in \cite{langeveld2023renormalization}[Theorem 2.6], two of four conditions (`summability of upper floors' and `large image property') are trivially satisfied by the fact that $T$ maps each $\Delta\binom{n}{m}$ to the entire unit square; these rectangles form our Markov partition. The two remaining conditions are `summable variation' and the fact that these rectangles form a generating partition.

        To show that the preimages of the partition separates our space into points, first note that in the second coordinate we simply apply the Gauss map $g$ and our partition exactly separates the space in the second coordinate by the first partial quotient of $y$. So if $(x_1,y_1)$, $(x_2,y_2)$ are two points with $y_1 \neq y_2$, iterating $T$ will eventually send these two points into different elements of our partition.

        If $y_1=y_2=y$, then our map generates a proper continued fraction expansion of $x_1$, $x_2$ with the sequence of numerators given by the regular continued fraction partial quotients of $y$. By \cite{L40}[Theorems 1.1, 1.3], if $x_1 \neq x_2$ then eventually these expansions must be different, in which case our map $T$ will send the two points to different elements of the partition.

        The only nontrivial condition to verify is `summable variation':
        \[\sum_{n \in \mathbb{N}} \omega_n < \infty,\]
        where the $\omega_n$ are given by
        \[\omega_n = \sup_{C \in \mathcal{R}^n} \sup_{(x_1,y_1),(x_2,y_2) \in C} \log \frac{\det DT(x_1,y_1)}{\det DT(x_2,y_2)}\]
        where $\mathcal{R}$ is our Markov partition and  $\mathcal{R}^n=\bigvee_{i=0}^{n-1}T^{-i}(\mathcal{R})$.

        It is quick to compute
        
        \[DT(x,y) = \left[\begin{array}{c c} \frac{-\floor{\frac{1}{y}}}{x^2} & 0 \\ 0 & \frac{-1}{y^2}\end{array} \right], \qquad \det DT(x,y)=\frac{\floor{\frac{1}{y}}}{x^2y^2}.\]

        \[\frac{\det DT(x_1,y_1)}{\det DT(x_2,y_2)} = \frac{\floor{\frac{1}{y_1}} x_2^2 y_2^2}{\floor{\frac{1}{y_2}}x_1^2 y_1^2}\]
        
        Two points belong to the same element of $\mathcal{R}^n$ exactly when $y_1,y_2$ have the same first $n$ partial quotients in their regular continued fraction expansion and $x_1,x_2$ have the same first $n$ partial quotients in their proper continued fraction expansion using the partial quotients of $y$ as numerators. So $\floor{\frac{1}{y_1}} = \floor{\frac{1}{y_2}}$, allowing us to simplify
        \[ \log \frac{\det DT(x_1,y_1)}{\det DT(x_2,y_2)} = 2 \left(\log \frac{x_2}{x_1} + \log \frac{y_2}{y_1}\right) \]
        So within a particular element $C \in \mathcal{R}^n$, we find the supremum of the above by choosing $y_1,y_2$ as well as $x_1,x_2$ to be opposite ends of the cylinder of length $n$ determined by $C$. Furthermore, the magnitude of these ratios will be maximized by making ensuring these cylinders are as `wide' as possible, in the sense that there are points as far apart as possible whose continued fraction expansions (regular for $y$, proper for $x$) begin as determined by $C$.

        It follows, then (very similar to our proof in \cite{langeveld2023renormalization}) that these ratios are maximized by setting $C$ to be such that all $y \in C$ have regular continued fraction beginning $[0;1,1,\ldots,1]$. This means that $C$ will also compute regular continued fraction expansion of the $x$, and we will maximize the ratio by imposing the same condition on the first $n$ partial quotients of $x$.

        Altogether, we pick both $x_1,x_2$ and $y_1,y_2$ to be ratios of two successive Fibonacci numbers:
        \[x_1 =y_1= F_{n+2}/F_{n+1},\qquad x_2=y_2 = F_{n}/F_{n+1}\]
        when $n$ is odd, reversing the choice when $n$ is even. We then have for sufficiently large $n$ that
        \[\log \frac{x_1}{x_2}, \log \frac{y_1}{y_2} < C^n, \qquad \omega_n < 4 C^n \] where $(-1+\sqrt{5})/2<C<1$, and summability of $\omega_n$ now follows directly.

        We also remark that because the $\omega_n$ are exponentially decaying, $T$ is not just ergodic, but exponentially mixing as well \cite{1078-0947_2005_4_639}[Theorem 1.1].
    \end{proof}
\end{theorem}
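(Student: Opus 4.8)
The plan is to present $T$ as a piecewise-invertible Markov map with countably many \emph{full} branches and invoke the spectral-gap/exponential-decay criterion available for such systems, as developed in \cite{1078-0947_2005_4_639}. The Markov partition is the family of rectangles $\Delta\binom{n}{m}$, and the crucial structural feature is that $T$ sends each such rectangle bijectively onto the entire unit square. This full-branch property makes two of the required hypotheses — a large-image condition and the summability of the ``upper floors'' of the branches — automatic, since every branch has image the whole square. What remains is to verify that the partition is generating and that the log-Jacobian has summable variation.

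For the generating property I would argue one coordinate at a time. The second coordinate evolves under the Gauss map alone, and the rectangles refine in $y$ precisely according to the first regular continued fraction partial quotient; hence two points with $y_1 \neq y_2$ are separated by some iterate of $T$. If $y_1 = y_2 = y$, then the two orbits produce PCF expansions of $x_1$ and $x_2$ with one and the same numerator sequence (the partial quotients of $y$); by Leighton's uniqueness results \cite{L40} these expansions must eventually differ when $x_1 \neq x_2$, which again forces separation by a future iterate. Thus the refinements $\bigvee_{i=0}^{n-1}T^{-i}(\mathcal{R})$ shrink to points.

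The analytic core is summable variation of $\log\det DT$ along cylinders. Because $T$ acts coordinatewise, $DT$ is diagonal and $\det DT(x,y) = \floor{1/y}/(x^2 y^2)$. On a depth-$n$ cylinder the integer $\floor{1/y}$ is constant, so the distortion ratio collapses to a bound on $2\big(\log(x_2/x_1) + \log(y_2/y_1)\big)$; the variation $\omega_n$ is then governed by the oscillation of $\log x$ and $\log y$ over cylinders of combinatorial depth $n$, and I must show $\sum_n \omega_n < \infty$.

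The main obstacle I anticipate is the uniform geometric control of cylinder widths. The natural guess is that the \emph{widest} cylinders are those whose $y$-expansion — and therefore the induced $x$-expansion — consists entirely of ones: these correspond to ratios of consecutive Fibonacci numbers, since increasing any partial quotient (or any numerator, which equals $1$ in the all-ones case) only narrows the cylinder. Making this extremality rigorous, and then deducing that the extremal widths decay like $C^n$ for some $(\sqrt 5 - 1)/2 < C < 1$ (the golden-mean contraction rate), is the delicate step. Once it is in hand one obtains $\omega_n = O(C^n)$, so $\sum_n \omega_n$ converges; the cited theorem then yields a spectral gap for the transfer operator, and hence an invariant measure mutually absolutely continuous with respect to Lebesgue, ergodicity, and in fact exponential mixing, all simultaneously.
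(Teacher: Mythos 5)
Your proposal follows the paper's argument essentially step for step: same Markov partition of full branches $\Delta\binom{n}{m}$ trivializing two of the four conditions, same coordinatewise generating argument via the Gauss map and Leighton's uniqueness, same diagonal Jacobian computation collapsing the distortion to $2\left(\log\frac{x_2}{x_1}+\log\frac{y_2}{y_1}\right)$, and same identification of the all-ones (Fibonacci) cylinders as extremal with geometric decay rate $C^n$. The ``delicate step'' you flag is treated at the same level of rigor in the paper itself, so there is nothing to add.
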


\begin{corollary}
    There is some $C$ so that for almost all $(x,y)$, setting $p_n/q_n$ to be the (not necessarily coprime) convergents to $x$ given by \Cref{eq:rec for pn and qn}
    we have
    \[\lim_{n \rightarrow \infty} q_n^{1/n} = C.\]
    That is, the denominators of the proper continued fraction expansion generically grow according a fixed exponential rate.
    \begin{proof}
        Recall that 
        \[
        B_{a,b}=\left(\begin{matrix}
        0 & a \\
        1 & b
        \end{matrix}\right)
        \]
        and 
        \begin{equation*}
    M_n=B_{a_1,b_1}B_{a_2,b_2}\cdots B_{a_n,b_n}=\left(\begin{matrix}
p_{n-1} & p_n  \\
q_{n-1} & q_n
\end{matrix}\right)
\end{equation*}

        The matrix $B_{a_n,b_n}$ has eigenvalues
        \[ \lambda_{\pm}=\frac{b_n \pm \sqrt{b_n^2 + 4a_n}}{2}.\]
        We need only show $\log$-integrability of $\lambda_+$ and $1/\lambda_-$ and cite the multiplicative ergodic theorem to complete the proof. As a function of $(x,y)$, if we set $b=\floor{1/y}$ and $a = \floor{b/x}$, then we see that these eigenvalues are constant on each element of our Markov partition. For a given choice of $a,b$ the corresponding element of the Markov partition is of measure
        \[ \left(\frac{1}{a}-\frac{1}{a+1} \right) \cdot \left(\frac{a}{b}-\frac{a}{b+1} \right)=\frac{1}{b(b+1)(a+1)}.\]
        
        On this piece of the Markov partition we investigate $\log \left(\lambda_+\right)$ using the fact that $0<a\leq b$:
        \[\log b< \log\left( \frac{b+\sqrt{b^2+4a}}{2} \right)<\log(b+1)\]
        We use the elementary inequality that for any $\varepsilon>0$, for sufficiently large $a$ we will have $\log(b+1)<b^{\varepsilon}$ and therefore:
        \[ \sum_{b=a}^{\infty} \frac{\log(b+1)}{b(b+1)}< a^{-1+\varepsilon}.\]
        We may now divide this quantity by $a+1$ as determined by the measure of our rectangles and sum over $a=1,2,\ldots$ to achieve log-integrability of $\lambda_+$. The procedure for $1/\lambda_-$ is similar.
        
    \end{proof}
\end{corollary}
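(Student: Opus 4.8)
The plan is to recognize $q_n$ as a matrix coefficient of the linear cocycle generated by the $B_{a_i,b_i}$ and to extract its exponential growth rate from the multiplicative ergodic theorem, using the ergodicity of $T$ just established. Recall from \eqref{eq:Mpnqn} that $q_n$ is the $(2,2)$-entry of $M_n = B_{a_1,b_1}\cdots B_{a_n,b_n}$, where each factor $B_{a_i,b_i}$ is determined by $(x_{i-1},y_{i-1}) = T^{i-1}(x,y)$. Thus $M_n$ is exactly the cocycle over $T$ with generator $A(x,y) = B_{a_1(y),\,b_1(x,y)}$, and the growth of $q_n$ is governed by the top Lyapunov exponent $\lambda_1$ of this cocycle. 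I would set $C = e^{\lambda_1}$.

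First I would reduce the growth of the single entry $q_n$ to that of the norm $\|M_n\|$. Each $B_{a,b}$ has nonnegative entries, the product $M_n$ has strictly positive entries for $n\geq 2$, and the recurrences \eqref{eq:rec for pn and qn} give $q_n\geq p_n$ and $q_n\geq q_{n-1}$, so $q_n$ is the largest of the four entries $p_{n-1},p_n,q_{n-1},q_n$ of $M_n$. Hence $\|M_n\| \asymp q_n$ up to a universal constant, and $\tfrac1n\log q_n$ and $\tfrac1n\log\|M_n\|$ share the same limit. By the Furstenberg--Kesten theorem (a special case of Kingman's subadditive ergodic theorem), $\tfrac1n\log\|M_n\|$ converges almost everywhere, and because $T$ is ergodic this limit is an almost-everywhere constant $\lambda_1=\log C$, giving $q_n^{1/n}\to C$ almost everywhere.

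The only hypothesis to verify is the integrability condition $\int \log^+\|A\|\,d\mu<\infty$ (together with $\int\log^+\|A^{-1}\|\,d\mu<\infty$ if one prefers to invoke the full multiplicative ergodic theorem). I would reduce these to log-integrability of the eigenvalues $\lambda_\pm=\tfrac12(b_n\pm\sqrt{b_n^2+4a_n})$ of $B_{a_n,b_n}$: since $\det B_{a,b}=-a$ one has $|\lambda_-|=a/\lambda_+$, so $\|A\|\asymp\lambda_+$ and $\|A^{-1}\|\asymp 1/|\lambda_-|=\lambda_+/a$. As $\lambda_\pm$ are constant on each Markov rectangle, and the rectangle with numerator $\alpha$ and denominator $\beta\geq\alpha$ has measure $\tfrac{1}{(\alpha+1)\beta(\beta+1)}$, the integrals become the double sum $\sum_{\alpha\le\beta}\tfrac{\log(\beta+1)}{(\alpha+1)\beta(\beta+1)}$, using $\log\beta<\log\lambda_+<\log(\beta+1)$ and $\log(\lambda_+/a)\leq\log(\beta+1)$. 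Summing the inner series in $\beta$ against the elementary bound $\log(\beta+1)\ll\beta^{\varepsilon}$ gives a tail of size $\alpha^{-1+\varepsilon}$; dividing by $(\alpha+1)$ and summing in $\alpha$ then converges.

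The main obstacle is precisely this integrability estimate, where two points require care. First, the double sum must be organized so that the logarithmic growth of $\log\lambda_+$ is controlled against the rapidly decaying rectangle measures, which is where $\log(\beta+1)\ll\beta^{\varepsilon}$ enters. Second, the integrability is needed against the invariant measure $\mu$ rather than Lebesgue, so I would invoke that $\mu$ is mutually absolutely continuous with Lebesgue with density bounded above (a consequence of the Gibbs--Markov structure established in the preceding theorem) in order to transfer the Lebesgue estimate computed over the rectangles. Once integrability is secured, the multiplicative ergodic theorem together with ergodicity immediately produces the single almost-everywhere constant $C$.
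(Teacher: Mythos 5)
Your proposal follows essentially the same route as the paper: view $M_n=B_{a_1,b_1}\cdots B_{a_n,b_n}$ as a cocycle over the ergodic map $T$, verify log-integrability of the generator via the eigenvalues $\lambda_\pm$ being constant on Markov rectangles of measure $\frac{1}{(a+1)b(b+1)}$, bound $\log\lambda_+$ between $\log b$ and $\log(b+1)$, sum using $\log(b+1)\ll b^{\varepsilon}$, and invoke the multiplicative ergodic theorem. Your added steps --- reducing the single entry $q_n$ to $\|M_n\|$ by noting $q_n$ dominates the other entries, and transferring integrability from Lebesgue to the invariant measure --- are details the paper leaves implicit, but the argument is the same.
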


\section{Particular cases of proper continued fractions}\label{sec:part}
In this section we will show that for certain choices of $y$, you can find continued fractions that come from known continued fraction algorithms: `continued fraction expansions with variable numerators' (\cite{DKL15}), Engel continued fractions (\cite{HKS02}), or greedy $N$-continued fraction expansions (\cite{BGKW08}).

Other choices of $y$ may give rise to interesting (but to our knowledge unstudied) types of continued fractions. For example, the choice of $y=x$ leads to taking the digits of the regular continued fraction of $x$ as numerators. One can take $y=[0;1,2,3,4,5,\ldots]$ or any other simple sequence. Or one  can make the next numerator a function of the current digit (for Engel continued fractions they are equal). It would be interesting to see if such choices lead to any intriguing behaviour. Such investigations are beyond the scope of this article.

\subsection{Continued fraction expansions with variable numerators}\label{sec:varnum}
These continued fractions where introduced in \cite{DKL15}. Here, instead of taking $ \lfloor\frac{1}{x}\rfloor $ as the digit, it is chosen as the numerator and the digit is chosen such that the result is a proper continued fraction. More precisely, let $\hat{T}:[0,1]\rightarrow[0,1]$ be defined as 
\[
\hat{T}(x)=\frac{\lfloor\frac{1}{x}\rfloor}{x}-\left\lfloor \frac{\lfloor\frac{1}{x}\rfloor}{x}\right\rfloor 
\]
for $x\neq 0$ and $\hat{T}(0)=0$. Then, by setting $a_1(x)=\lfloor\frac{1}{x}\rfloor $, $b_1(x)=\left\lfloor \frac{\lfloor\frac{1}{x}\rfloor}{x}\right\rfloor $, $a_n(x)=a_1(\hat{T}^{n-1}(x))$ and $b_n(x)=b_1(\hat{T}^{n-1}(x))$ for $n\geq 2$, we find that $\hat{T}$ generates proper continued fractions as in \cite{DKL15}. They are exactly those continued fractions for which $a_n\in\mathbb{N}$ and $a_n\leq b_n \leq a_n^2+a_n-1$.
Our map $T(x,y)$ generates all proper continued fraction so there is a unique $y$ that gives the `continued fraction with variable numerator' from \cite{DKL15}. We can view this $y$ as a function of $x$. In Figure \ref{fig:varnum} a plot is given. The function $y(x)$ shows some interesting behaviour. First, note that almost all irrational numbers in $(0,1)$ are the image of infinitely many originals. This stems from the fact that whenever the $k^{\text{th}}$ digit in the continued fraction expansion of $y$ is greater than 1, i.e. we use a numerator $a_k>1$ in our expansion of $x$, there is more than one choice for the value of $b_k$ for $x$. Second, we have that $y(\hat{T}(x))=G(y(x))$ where $G$ is the Gauss map. This gives us the following self similarity. For $x\in(\frac{1}{2},1)$ we have $a_1=b_1=1$ so that $y\in(\frac{1}{2},1)$ and $y(\hat{T}(x))=y(\frac{1}{x}-1)=\frac{1}{y(x)}-1 =G(y(x))$. In other words: $y(x)=\frac{1}{y(\frac{1}{1+x})}-1$. More generally we have that, when $a_1(x)=j, b_1(x)=k$, the equation $y(\frac{j}{x}-k)=\frac{1}{y(x)}-j$ holds. This gives us  $y(x)=\frac{1}{y(\frac{j}{k+x})}-j$.

\begin{figure}[h]
    \centering
    \includegraphics[width=0.5\linewidth]{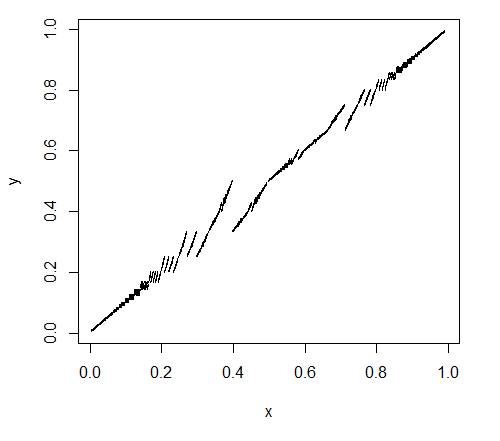}
    \caption{A plot of $y(x)$ for the continued fractions with variable numerators.}
    \label{fig:varnum}
\end{figure}

\subsection{Engel continued fractions}

Engel continued fractions where introduced in \cite{HKS02}.
They are proper continued fractions for which the numerators are a non decreasing sequence, and the digits are equal to the next numerator, i.e $b_i=a_{i+1}$ for all $i\geq 1$. These continued fractions can be generated by the map $\Tilde{T}:[0,1]\rightarrow[0,1]$ defined as
\[
\Tilde{T}(x)=\frac{1}{\lfloor\frac{1}{x}\rfloor} \left(\frac{1}{x}-\left\lfloor\frac{1}{x}\right\rfloor\right)=\frac{1}{\lfloor\frac{1}{x}\rfloor x}-1
\]
whenever $x\neq 0$ and $\Tilde{T}(0)=0$. By setting $b_1(x)=\floor{1/x}$ and $b_n=b_1(\Tilde{T}^{n-1}(x))$ we find the continued fraction
\begin{equation}\label{eq:engel}
    x= \frac{1}{\displaystyle b_1+\frac{b_1}{\displaystyle b_2+\ddots \frac{b_n}{\displaystyle b_{n+1}+\ddots}}}
\end{equation}
for which we have $b_{i+1}\geq b_{i}$.

\begin{figure}
    \centering
    \includegraphics[width=0.45\linewidth]{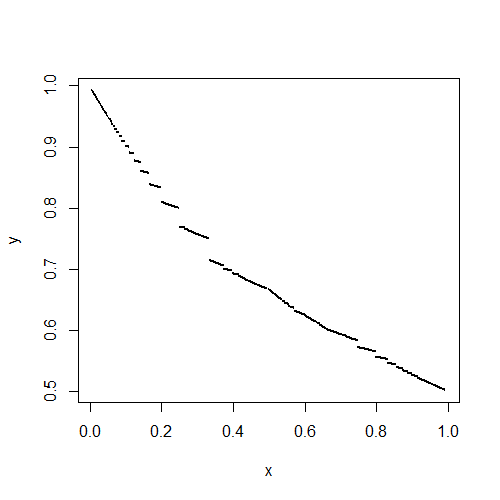}
    \caption{A plot of $y(x)$ for Engel continued fractions.}
    \label{fig:engel}
\end{figure}

Here we can also study the function $y(x)$ where $y$ is the unique value for which our map $T(x,y)$ generates the Engel continued fraction of $x$. Note that the first digit of $y$ must be 1 for all $x$ so that $y(x)>\frac{1}{2}$.
The function $y(x)$ behaves very differently from the one in Section \ref{sec:varnum}. For instance, $y(x)$ only attains values in the set of numbers with non decreasing continued fraction digits (which is a Lebesgue measure 0 set). Since the first numerator is always equal to $1$, if we want to know the $y$ value for $\Tilde{T}(x)$, we have to remove the digit 1 and the first digit from $y(x)$ and then put the digit $1$ back in front again. This gives us $y(\Tilde{T}(x))=\frac{1}{1+G(\frac{1}{y(x)}-1)}$. When $b_1(x)=k$, we find $y(\frac{1}{kx}-1)=\frac{1}{1-k+1/(\frac{1}{y(x)}-1)}$.

\subsection{Greedy $N$-continued fraction expansions}
Let $N\in \mathbb{N}_{\geq 2}$ and consider expansions of the form
\begin{equation}\label{eq:Nexp}
    x= \frac{N}{\displaystyle b_1+\frac{N}{\displaystyle b_2+\ddots \frac{N}{\displaystyle b_n+\ddots}}}
\end{equation}
which are called $N$-continued fraction expansions. It turns out that such expansions are not unique. For irrational $x$ there are infinitely many different sequences $(b_n)_{n\geq 1}$ of digits such that $x$ can be expressed as \eqref{eq:Nexp}. Though, if we demand that $b_i\geq N$ for all $i\geq n$, we find only find one representation which is called the greedy $N$-continued fraction expansion. Note that this is the only proper continued fraction with constant numerators (greater than 1). These continued fractions can easily be found using the map $T(x,y)$ and taking $y=[0;\overline{N}]$. 

\bibliographystyle{alpha}
\bibliography{bibliography}

\end{document}